\newtheorem{theorem}{Theorem}
\newtheorem{myAss}[theorem]{Assumption}%
\newtheorem{lemma}[theorem]{Lemma}%
\newtheorem{proof}{proof}
\title{Adaptive Moment Estimation Optimization Algorithm Using Projection Gradient for Deep Learning}
\author{
	Yongqi Li \\
	School of Mathematical Sciences\\
	University of Electronic Science and Technology of China\\
	Chengdu, Sichuan, Chain 611731 \\
	\texttt{yongqi2023@126.com} \\
	\And
	Xiaowei Zhang* \\
	School of Mathematical Sciences\\
	University of Electronic Science and Technology of China\\
	Chengdu, Sichuan, Chain 611731 \\
	\texttt{x.w.zhang@126.com} \\
}
\begin{document}
	\maketitle
	\begin{abstract}
		Training deep neural networks is challenging. To accelerate training and enhance performance, we propose PadamP, a novel optimization algorithm. PadamP is derived by applying the adaptive estimation of the $ p $-th power of the second-order moments under scale invariance, enhancing projection adaptability by modifying the projection discrimination condition. It is integrated into Adam-type algorithms, accelerating training, boosting performance, and improving generalization in deep learning. Combining projected gradient benefits with adaptive moment estimation, PadamP tackles unconstrained non-convex problems. Convergence for the non-convex case is analyzed, focusing on the decoupling of $\beta_{1}$ and $\beta_{2}$. Unlike prior work relying on $ \beta_{1}/\sqrt{\beta_{2}} < 1 $, our proof generalizes the convergence theorem, enhancing practicality. Experiments using VGG-16 and ResNet-18 on CIFAR-10 and CIFAR-100 show PadamP's effectiveness, with notable performance on CIFAR-10/100, especially for VGG-16. The results demonstrate that PadamP outperforms existing algorithms in terms of convergence speed and generalization ability, making it a valuable addition to the field of deep learning optimization.
	\end{abstract}


	\section{Introduction}
	First order optimization algorithms have made significant progress in training deep neural networks. One of the most prominent algorithms is stochastic gradient descent (SGD)\cite{ref1} . Despite simple, SGD performs well in many applications. However, its disadvantage is that it scales the gradient uniformly in all directions, which can lead to poor performance and limited training speed when the training data is sparse. In recent years, adaptive variants of SGD have emerged and have been successful due to their convenient automatic learning rate tuning mechanism. Adagrad\cite{ref2} is probably the first along this line of research and significantly outperforms vanilla SGD in sparse gradient scenarios. Despite its initial success, Adagrad has later find to exhibit performance degradation, particularly in cases where the loss function is non-convex or the gradient is dense. Several variants of Adagrad have been developed, including RMSprop\cite{ref3}, Adadelta\cite{ref4}, and Nadam\cite{ref5}.
	
	The Adam\cite{ref6} algorithm proposed by Kingma et al. in 2014. It integrates the best of previous algorithms such as RMSprop and Adadelta's squared accumulation of historical gradients, i.e., controlling the learning rate by exponential moving average. It also adds the idea of momentum and considering the information of the historical gradient. Adam not only accumulates historical gradient information by using exponential moving average, but also corrects for the bias of the calculated iteration direction, the adaptive learning rate, first-order moment estimation and the second-order moment estimation. It is well suited for problems with large-scale data and parameters, as well as for solving problems containing Gaussian noise or sparse gradients. Adam tends to perform well in real-world applications, and is therefore favored by a wide range of researchers and engineers. However, Adam is by no means foolproof, and its convergence property is not always guaranteed. Reddi\cite{ref7} et al. thoroughly proved that Adam does not guarantee convergence even in simple convex optimization problems; Shazeer\cite{ref8} et al. also showed empirically that Adam's parameter updates are not stable and its second moment may be outdated. Luo\cite{ref9} et al. studied Adam's effective update step in training and find that its second moment produces extreme learning rates; Zaheer\cite{ref10} et al. find that Adam's performance may be affected by different values of $\lambda$, which are initially designed to avoid zeros in the denominator. There are other variants of the adaptive gradient method, such as SC-Adagrad/SC-RMSprop\cite{ref11}, which derive logarithmic regret bounds for strongly convex functions. 
	
	The research on Adam has led some scholars to incorporate the projected gradient theory from traditional optimization theory into Adam to improve its optimization effect and enhance the accuracy of deep learning. Madgrad\cite{ref30} improving gradient accumulation by dual averaging with cube root scaling, breaking with the previous tradition of square root gradient accumulation. It lead the first generalised $ p $-power of gradient accumulation.
	And then Chen \& Gu\cite{chen2018closing} propose the Padam algorithm, which replaces the square root operation of second-order moments in Adam with a $ p $-power partially adaptive moment estimation method. It solves the small learning rate dilemma of the adaptive gradient method and allows for faster convergence. The value of $p \in [0,1/2] $ is a partially adaptive parameter, with $ 1/2 $ being the maximum possible value of $ p $. Using a larger $ p $ may cause the proof to not converge. When $ p $ approaches 0, the algorithm reduces to sgdm. When $ p = 1/2 $, the algorithm becomes Amsgrad. Therefore, Padam unifies Amsgrad and SGD. According to AdamP\cite{ref13}, introducing momentum in the GD optimizer results in a faster decrease in the effective step size of the scale-invariant weights. The proposed AdamP eliminates the radial component or direction of the paradigm increase in each optimization step. The method is scale-invariant and only alters the effective step size, preserving the original convergence properties of the GD algorithm.
	
	Applying $ p $-th  powers of gradient accumulation to the application of the AdamP algorithm to design a new PadamP algorithm, our work incorporates ideas from the work referenced in \cite{ref30}, \cite{ref12} and \cite{ref13}. More specifically, the adaptivity of the AdamP algorithm is increased based on the partial adaptation to the second order moments, and the introduction of projection techniques eliminates the radial component or direction of the paradigm growth. The method is scale-invariant, changing only the effective step size. Then we prove the convergence of the proposed algorithm for non-convex situation. Unlike the work in the AdamP, in this paper we use the learning rate multiplied by cosine similarity as a projection discriminant. Experiments show that the proposed algorithm performs better than AdamP and other optimizers.
	
	There are two main views on the proof of convergence: one is based on regret bounds and the other is based on stable points. In convex optimization problems, we usually adopt the regret bound to analyse convergence; while for non-convex problems, we rely more on the analysis of stable points. Our convergence analysis draws on the framework of the work\cite{ref14} and further extends the relevant theorems to the context of unbiased estimation of first-order moments, obtaining in a more generalised convergence theorem. Several experiments were  to verify the effectiveness and performance of our proposed PadamP algorithm.
	
	To sum up, the main contributions can be summarized as follows:
	
	(i) The PadamP is obtained by applying the adaptive estimation of $ p $-th powers of second-order moments in Padam to the AdamP under scale invariance, change the projection discrimination condition to make the projection more adaptive. Also incorporating the PadamP into Adam-type optimization algorithms for deep learning that speed up the training process, improve performance, and enhance the generalisation ability of deep neural networks. The PadamP combines the benefits of the projected gradient algorithm for solving unconstrained non-convex optimization problems with adaptive moment estimation.
	
	(ii) The convergence for non-convex case is analyzed. The convergence analysis
	tackles a hard situation: decoupling of the first-order moment estimation coefficients $\beta_{1}$ and the second-order moment estimation coefficients $\beta_{2}$. A large amount of 
	work have been based on the relationship $\frac{\beta_{1}}{\sqrt{\beta_{2}}} < 1$  between $\beta_{1}$ and $\beta_{2}$, but it have not been possible to decouple. Our proof generalises the convergence theorem, making it more practical.
	
	(iii) Numerical experiments were conducted on image classification tasks using the popular VGG-16 and ResNet-18 networks on the CIFAR-10 and CIFAR-100 datasets, respectively. The experimental results demonstrate the effectiveness of PadamP and provide satisfactory performance on the CIFAR-10/100 datasets, especially for the VGG-16 network.

	\section{Preliminaries}
	\label{sec:headings}
	Here some necessary knowledge are prepared for better understanding.
	
	\subsection{Notation}
	
	Scalars are denoted by lower case letters, vectors by lower case bold face letters, and matrices by upper case bold face letters. For a vector $\mathbf{\theta} \in \mathbb{R}^{d}$, we denote the $\ell_{2}$ norm of $\mathbf{\theta}$ by , the $\ell_{\infty}$ norm of $\mathbf{x}$ by $\|\mathbf{\theta}\|_{2}=\sqrt{\sum_{i=1}^{d} \theta_{i}^{2}}$. For a sequence of vectors $\left\{\mathbf{\theta}_{j}\right\}_{j=1}^{t}$, we denote by $\theta_{j, i}$ the $i$-th element in $\mathbf{\theta}_{j}$. We also denote $\mathbf{\theta}_{1: t, i}=\left[x_{1, i}, \ldots, x_{t, i}\right]^{\top}$. With slight abuse of notation, for two vectors $\mathbf{a}$ and $\mathbf{b}$, we denote $\mathbf{a}^{2}$ as the element-wise square, $\mathbf{a}^{p}$ as the element-wise power operation, $\mathbf{a} / \mathbf{b}$ as the element-wise division and $ \max (\mathbf{a}, \mathbf{b}) $ as the element-wise maximum. We denote by $ \operatorname{diag}(\mathbf{a}) $ a diagonal matrix with di agonal entries $a_{1}, \ldots, a_{d}$. Given two sequences $\left\{a_{n}\right\}$ and $\left\{b_{n}\right\}$, we write $a_{n}=O\left(b_{n}\right) $ if there exists a positive constant $C$ such that $a_{n} \leq C b_{n}$ and $a_{n}=o\left(b_{n}\right) $ if $a_{n} / b_{n} \rightarrow 0 $ as $n \rightarrow \infty$. Notation $\widetilde{O}(\cdot)$ hides logarithmic factors. We also denote the normlization $ \operatorname{Norm}(\cdot) $ satisfy $ \operatorname{Norm}_{\boldsymbol{k}}(\theta)=\frac{\theta-\mu_{\boldsymbol{k}}(\theta)}{\sigma_{\boldsymbol{k}}(\theta)} $, where  $ \mu_{\boldsymbol{k}} $ , $ \sigma_{\boldsymbol{k}} $ are the mean and standard deviation functions
	along the axes $ k $.
	
	\subsection{Stochastic Optimization, Generic Adam and Stationary Point}
	\textbf{Stochastic Optimization}
	For any deep learning or machine learning model, it can be analysed by stochastic 
	optimization framework. In the first place, consider the problem in the following form:
	\begin{equation} 
		{\min _{\theta \in \mathcal{X}} \mathbb{E}_{\pi}[\mathcal{L}(\theta, \pi)]+ \sigma(\theta)},
	\end{equation}
	where $\mathcal{X} \subseteq \mathbb{R}^{d}$ is feasible set. $\pi$ is a 
	random variable with an unknown distribution,
	representing randomly selected data sample or random noise. $\sigma(\theta)$ is a regular term.
	For any given $\theta$, $\mathcal{L}(\theta, \pi)$ usually represents the loss function on sample
	$\pi$. But for most practical cases,  the distribution of $\pi$ can not be obtained. 
	Hence the expectation $\mathbb{E}_{\pi}$ can not be computed. 
	Now there is another one to be considered,
	known as Empirical Risk Minimization Problem(ERM):
	\begin{equation}\label{eq2}
		{\min _{\theta \in \mathcal{X}} f(\theta) = \frac{1}{N} \sum_{i=1}^{N} \mathcal{L}_{i}(\theta)+\sigma(\theta)},
	\end{equation}
	where $\mathcal{L}_{i}(\theta)=\mathcal{L}(\theta, \pi_{i})$, $i=1,2,\ldots,N$ and $\pi_{i}$ are samples.
	For the convenience of our discussion below, without loss of generality, 
	the regular term $\sigma(\theta)$ is ignored.
	
	\textbf{Generic Adam}
	There are many stochastic optimization algorithms for solving ERMs, such as SGD\cite{ref1}, Adadelta\cite{ref4}, RMSprop\cite{ref3}, Adam\cite{ref6}, AdamW\cite{ref15},etc. All of the algorithms listed above are first-order optimisation algorithms and can be described by the following generic Adam algorithm (Alg.\ref{alg:G-Adam}) where $\varphi_{t}:\mathcal{X} \rightarrow S^{d}_{+}$, is an unspecified 'mean' function, typically used for inverse weighted first order moment estimation. Generalised estimates of the first order moment of Adam's 
	are biased. Our study will focus on the generalised Adams moment estimates to unbiased estimates, and we will project the gradient and prove convergence based on the gradient and prove convergence based on it.
	
	\begin{algorithm}
		\caption{Generic Adam}
		\label{alg:G-Adam}
		\begin{algorithmic}
			\State Require: ${x}_{1} \in \mathcal{X}$, ${m}_{0}:={0}$, 
			$(\beta  _{1t})_{t \in \mathcal{T} } \subset [0,1)$.
			
			\While{$t=1$ to $T$}
			\State 
			$g_{t}$ : $noisy \ gradient$
			
			$m_{t}\leftarrow \beta _{1t} {m}_{t-1} + (1-\beta _{1t}) {g}_{t}$
			
			${V}_{t} \leftarrow \varphi_{t} (g_{1},g_{2},\ldots,g_{t})$
			
			$\theta_{t+1} \leftarrow \theta_{t}-\frac{\alpha_{t}}{\sqrt{V_{t}+\epsilon I}} \cdot m_{t}$
			\EndWhile
		\end{algorithmic}
	\end{algorithm}
	
	\textbf{Stationary Point}
	A stationary point of a differentiable function $f(\theta)$ occurs when $\| \nabla f(\theta^{*}) \|_{2} = 0$, where $ \nabla f(\theta^{*}) $ denotes the gradient of $f$. In the case of convex functions,$\theta^{*}$ where $\theta^{*} \in \mathcal{X}$ serve as a global minimizer of $f$ over $\mathcal{X}$. However, in practical scenarios, $f$ is often non-convex. When an optimization algorithm yields a stationary point solution, the gradient of $f$ is nearly zero in the vicinity of the solution, resulting in apparent stagnation in the optimization process. Although prolonged runs may escape from such local optima, the associated time costs could be deemed unacceptable.

	\section{PadamP}
	\label{}
	\subsection{Proposed Algorithm}

	In the realm of deep learning optimization, developing efficient and effective algorithms is of paramount importance. This section delves into the preparation and design of a novel optimization algorithm, PadamP, by drawing insights from existing algorithms such as Madgrad\cite{ref30}, Padam\cite{ref12}, and AdamP\cite{ref13}. The work towards the creation of PadamP is a sequential process, with each step building upon the previous one, aiming to address the limitations of traditional optimization methods.
	
	\textbf{Motivation and Background}
	Deep learning optimization has witnessed the emergence of numerous algorithms, each striving to achieve better performance. Adaptive gradient methods, like Adam, have shown promise in quickly adjusting the learning rate but often suffer from generalization issues compared to SGD with momentum. This gap in generalization performance has spurred researchers to seek solutions that can combine the best of both worlds.
	
	Madgrad, proposed by Aaron Defazio et al., represents a significant advancement in this field. It is based on the dual averaging formulation (DA) of AdaGrad\cite{ref2}, which has the following general form:
	\begin{equation}\label{eq10}
		\begin{array}{l}
			g_{t} =\nabla f\left(\theta_{t}, \xi_{t}\right), \\
			s_{t+1} =  s_{t}+\eta_{t} g_{t}, \\
			\theta_{t+1} =\arg \min _{\theta}\left\{\left\langle s_{t+1}, \theta\right\rangle+\beta_{t+1} \psi(\theta)\right\},
		\end{array}
	\end{equation}
	Here, $\psi(\theta)$ is the Bregman divergence core composition function. 
	The gradient buffer $ s_{0} $ is set to the zero vector as the initial value. The most basic form of dual averaging comes into play when the standard Euclidean squared norm is employed. Specifically, we have $ \psi(\theta)=\frac{1}{2}\left\|\theta-\theta_{0}\right\|^{2} $ and $ \lambda_{t}=1 $. Under such circumstances, the method can be expressed as follows:
	\begin{equation}\label{eq12}
		\theta_{t + 1}=\theta_{0}-\frac{1}{\beta_{t + 1}}\sum_{i = 0}^{t}g_{i}.
	\end{equation}
	
	When the objective is non-smooth, stochastic, or both,  $ \beta $  sequences like  $ \beta_{t+1}=\sqrt{t+1} $  lead to convergence. Equation \ref{eq12} seems unlike SGD, but SGD's update  $ \theta_{t+1}=\theta_{t}-\gamma_{t} \nabla f\left(\theta_{t}, \xi_{t}\right) $  can be rewritten as  $ \theta_{t+1}=\theta_{0}-\sum_{i=0}^{t} \gamma_{i} g_{i} $ . For convergence without a fixed stop,  $ \gamma_{i} \propto 1 / \sqrt{i+1} $  is typical.
	
	Comparing SGD and DA at step  $ t $ , SGD weights newer  $ g_{i} $  less than earlier ones in the sum, while DA weights all  $ g_{i} $  equally. This difference explains why DA methods and SGD behave differently in practice, even without extra regularization or non-Euclidean functions.
	
	On benchmarks, DA trails SGD. AdaGrad and DA underperform due to $\frac{1}{\sqrt{i + 1}}  $ learning rate sequence, which is harmful. Sqrt-decay causes quick convergence to a poor local minima. AdaGrad and AdaGrad-DA have an implicitly decreasing sequence, with $\frac{1}{\sqrt{t}}$ rate when gradients are similar, not ideal. Using the same learning rate scheme makes scholars think AdaGrad is less effective than Adam. For DA, we propose $\lambda_{i}=(i + 1)^{1 / 2} \gamma_{i}$. This gives the leading term in 
	$ 	\theta_{t+1} =\theta_{0}-\frac{1}{\sqrt{t+1}} \sum_{i=0}^{t} \lambda_{i} g_{i} $
	constant weight across k, like SGD's constant step size and keeping past gradients' $\sqrt{t + 1}$ decay. It significantly improves DA's test-set performance with SGD's schedule.
	
	Next, the goal is to integrate dual averaging, and AdaGrad's adaptivity. However, combining the coordinate-wise adaptivity of AdaGrad with the weighted gradient sequence $\lambda_{i}=\sqrt{i + 1}$ is challenging as different denominator-weighting methods have flaws in maintaining the effective step-size. Madgrad firstly use a cube-root denominator in its update formula
	\begin{equation}
		\theta_{t + 1}=\theta_{0}-\frac{1}{\sqrt[3]{\sum_{i = 0}^{t}(i + 1)^{1/2}\gamma_{i}g_{i}^{2}}}\sum_{i = 0}^{t}(i + 1)^{1/2}\gamma_{i}g_{i}.
	\end{equation}
	
	This is based on the solution of the minimization problem 
	\begin{equation}
		\begin{aligned}
			\min_{s}&\sum_{i = 0}^{t}\sum_{d = 0}^{D}\frac{g_{id}^{2}}{s_{d}}\\
			s.t.&\\
			&\|s\|_{2}^{2}\leq c,\\
			&s_{d}>0, \forall d,
		\end{aligned}
	\end{equation}
	with an L2 norm penalty. The solution to this problem is $s_{d}\propto\sqrt[3]{\sum_{i = 0}^{t}g_{id}^{2}}$. Given that $\sum_{i}^{t}(i + 1)^{1/2}\propto(t + 1)^{3/2}$, the cube-root operation generates a $\sqrt{t + 1}$ scaled denominator, which can counteract the square-root growth of $\lambda$.

	%
	%
	%
	
	\textbf{Partially Adaptive Momentum Estimation} 
	Despite the success of Madgrad in certain aspects, there is still room for improvement. Adaptive gradient methods like Adam, in the form $\theta_{t}:=\theta_{t}-\eta_{t}\hat{m}_{t}/\sqrt{\hat{v}_{t}}$ ($\hat{m}_{t}$ being the update direction), often lack the generalization and stability of SGD with momentum during training, although they may exhibit better training performance initially.
	
	Drawing inspiration from the design of Madgrad, we introduce the concept of the $p$-th power. Our goal is to create an algorithm that seamlessly combines the characteristics of Adam and SGD-Momentum. We propose the following update rule:
	\begin{equation}
		\theta_{t}:=\theta_{t}-\eta_{t}\hat{m}_{t}/(\hat{v}_{t})^{p},
	\end{equation}
	where $p\in[0,1/2]$. When $p$ approaches $0$, the algorithm simplifies to SGD with momentum. In this case, the influence of the adaptive component $(\hat{v}_{t})^{p}$ diminishes, and the update rule resembles that of SGD with momentum. When $p = 1/2$, the algorithm becomes similar to Adam. This generalization allows for a smooth transition between different optimization behaviors, enabling the algorithm to adapt to various scenarios.
	
	\textbf{Normalization Layer and Scale Invariance}
	Normalization layers play a impotant role in modern deep learning. For a function $g(\theta)$, if $g(c\theta)=g(\theta)$ for any $c > 0$, then $g$ is said to be scale-invariant. In the context of deep learning, normalization operations such as batch normalization (BN), layer normalization (LN), instance normalization (IN), and group normalization (GN) result in scale-invariant weights. Specifically,
	\begin{equation}
		\text{Norm}(\boldsymbol{w}^{\top}\theta)=\text{Norm}((c\boldsymbol{w})^{\top}\theta),
	\end{equation}
	for any constant $c > 0$.
	
	This implies that the weights $\theta$ preceding the normalization layer are scale-invariant.
	
	The $\ell_{2}$-norm of these scale-invariant weights, $\|\theta\|_{2}$, does not affect the forward computation $f_{\boldsymbol{\theta}}$ or the backward gradient $\nabla_{\theta}f(\theta)$ of a neural network layer $f_{\theta}$ parameterized by $\theta$. These scale-invariant weights can be represented as $\ell_{2}$-normalized vectors $\widehat{\theta}:=\frac{\theta}{\|\boldsymbol{\theta}\|_{2}}\in\mathbb{S}^{d-1}$ (i.e., $c=\frac{1}{\|\boldsymbol{\theta}\|_{2}}$). Understanding this scale-invariance property is essential as it has implications for the behavior of optimization algorithms.
	\begin{figure}[htpb]
		\centering
		\includegraphics[width=0.5\linewidth]{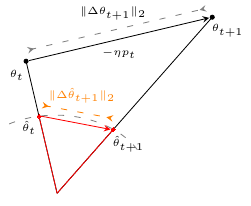}
		\caption{Normalization layer and scale invariance}
		\label{fig1}
	\end{figure}\\
	
	\textbf{Decay of Effective Step Sizes}
	Momentum is a widely used technique in gradient-based optimization, enhancing the convergence speed by enabling the parameter vector $\theta$ to escape high-curvature regions and handle noisy or vanishing gradients. Mathematically, the momentum update rule is given by:
	\begin{equation}\label{eq13}
		\theta_{t+1} = \theta_{t}-\eta p_{t}, \quad p_{t} = \beta p_{t-1}+\nabla_{\theta_{t}} f\left(\theta_{t}\right).
	\end{equation}
	
	When considering the norm growth of the parameters during optimization, we have the following equations for SGD and SGD with momentum:
	\begin{equation}\label{eq14}
		\left\|\theta_{t+1}\right\|_{2}^{2}=\left\|\theta_{t}\right\|_{2}^{2}+\eta^{2}\left\|p_{t}\right\|_{2}^{2},
	\end{equation}
	\begin{equation}\label{eq15}
		\left\|\theta_{t+1}\right\|_{2}^{2}=\left\|\theta_{t}\right\|_{2}^{2}+\eta^{2}\left\|p_{t}\right\|_{2}^{2}+2 \eta^{2} \sum_{k=0}^{t-1} \beta^{t-k}\left\|p_{k}\right\|_{2}^{2}.
	\end{equation}
	
	By comparing these two equations, we can observe that the equation for GD with momentum has an additional non-negative term $2\eta^{2}\sum_{k = 0}^{t-1}\beta^{t-k}\|p_{k}\|_{2}^{2}$ on the right-hand side. This term accumulates past updates and significantly accelerates the increase in weight norms when momentum is used.
	
	To quantify this difference, we have Lemma \ref{lem1}:
	\begin{lemma}\label{lem1}
		Let  $ \|\theta_{t}^{\mathrm{GD}}\|_{2} $  and  $ \|\theta_{t}^{\mathrm{GDM}}\|_{2} $  be the weight norms at step  $ t \geq 0 $ , following the recursive formula in Equation \ref{eq14} and Equation \ref{eq15}, respectively. We assume that the norms of the updates  $ \left\|p_{t}\right\|_{2} $  for GD with and without momentum are identical for every  $ t \geq  0 $ . We further assume that the sum of the update norms is non-zero and bounded: $  0<\sum_{t \geq 0}\left\|p_{t}\right\|_{2}^{2}<   \infty  $. Then, the asymptotic ratio between the two norms is given by:
		\begin{equation}\label{eq16}
			\frac{\|\theta_{t}^{\mathrm{GDM}}\|_{2}^{2}-\left\|\theta_{0}\right\|_{2}^{2}}{\left\|\theta_{t}^{\mathrm{GD}}\right\|_{2}^{2}-\left\|\theta_{0}\right\|_{2}^{2}} \longrightarrow 1+\frac{2 \beta}{1-\beta} \quad \text { as } \quad t \rightarrow \infty.
		\end{equation}
	\end{lemma}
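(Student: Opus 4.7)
The plan is to unroll the two recursions into closed-form expressions for $\|\theta_t\|_2^2 - \|\theta_0\|_2^2$, express each as a weighted sum of the common quantities $\|p_k\|_2^2$, and then take the ratio after letting $t \to \infty$ using the summability assumption $0 < \sum_{t\ge 0}\|p_t\|_2^2 < \infty$. Denote $S := \sum_{k\ge 0}\|p_k\|_2^2$ for brevity.

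First, from Equation \ref{eq14}, telescoping immediately gives the clean identity $\|\theta_t^{\mathrm{GD}}\|_2^2 - \|\theta_0\|_2^2 = \eta^2 \sum_{k=0}^{t-1}\|p_k\|_2^2$, which tends to $\eta^2 S$ as $t\to\infty$. Next, telescoping Equation \ref{eq15} yields
\begin{equation*}
\|\theta_t^{\mathrm{GDM}}\|_2^2 - \|\theta_0\|_2^2 = \eta^2 \sum_{j=0}^{t-1}\|p_j\|_2^2 + 2\eta^2 \sum_{j=0}^{t-1}\sum_{k=0}^{j-1}\beta^{j-k}\|p_k\|_2^2.
\end{equation*}
The key manipulation is swapping the order of summation in the double sum (Fubini for nonnegative terms), giving $\sum_{k=0}^{t-2}\|p_k\|_2^2 \cdot \beta\,(1-\beta^{t-1-k})/(1-\beta)$. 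This puts the extra momentum contribution into the form of a geometric weight applied to $\|p_k\|_2^2$.

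The main analytic step, and the one I expect to be the trickiest to justify cleanly, is passing to the limit $t\to\infty$ inside this sum. The integrand converges pointwise, for each fixed $k$, to $\beta/(1-\beta)\cdot\|p_k\|_2^2$, and it is dominated by $\beta/(1-\beta)\cdot\|p_k\|_2^2$ uniformly in $t$; since $\sum_k \|p_k\|_2^2 = S < \infty$, dominated convergence for series applies and the double sum tends to $\beta/(1-\beta)\cdot S$. Therefore $\|\theta_t^{\mathrm{GDM}}\|_2^2 - \|\theta_0\|_2^2 \to \eta^2 S\bigl(1 + \tfrac{2\beta}{1-\beta}\bigr)$.

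Finally, dividing the GDM limit by the GD limit $\eta^2 S$ — which is nonzero by the assumption $S > 0$, so the ratio is well-defined in the limit — produces the claimed asymptotic value $1 + 2\beta/(1-\beta)$. The only genuine subtlety is the dominated-convergence step above; everything else is bookkeeping on the two telescoped recursions. If one prefers to avoid invoking dominated convergence explicitly, a direct $\varepsilon$-argument works: split the sum at an index $K$ with $\sum_{k>K}\|p_k\|_2^2 < \varepsilon$ and bound the head and tail separately, using $\beta^{t-1-k}\to 0$ on the finite head.
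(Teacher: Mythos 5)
Your argument is correct and is essentially the standard proof of this lemma (the paper itself omits the proof, deferring to reference \cite{ref13}, where the same computation appears): telescope both recursions, interchange the order of summation in the momentum term to get the geometric weight $\beta(1-\beta^{t-1-k})/(1-\beta)$ on each $\|p_k\|_2^2$, and pass to the limit using summability of $\sum_k \|p_k\|_2^2$. The dominated-convergence (or equivalently the head/tail $\varepsilon$-split) step you flag is exactly the right justification, and the condition $S>0$ guarantees the denominator is eventually nonzero, so the ratio of limits is well-defined.
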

	
	Proof for the reference\cite{ref13}. 
	
	This shows that the use of momentum can lead to a much faster increase in weight norms, which may have a negative impact on the effective step size and, consequently, the convergence of the optimization algorithm.
	
	To address the issues associated with the decay of effective step sizes and the need to combine the advantages of different optimization methods, we propose the PadamP algorithm. We start with the partially adaptive momentum estimation concept and incorporate the scale-invariance considerations.
	
	We retain the benefits of momentum while eliminating the accumulated error term that causes the rapid increase in weight norms. Let $\Pi_{\theta}(\cdot)$ denote the projection onto the tangent space of $\theta$, defined as
	\begin{equation}
		\Pi_{\theta}(\theta):=\theta-\left\langle\widehat{w},\theta\right\rangle\widehat{\theta},
	\end{equation}
	where $\cos(a, b):=\frac{|a^{\top}b|}{\|a\|\|b\|}$ is the cosine similarity. Rather than manually identifying weights before normalization layers, our algorithm automatically detects scale invariances using cosine similarity. We have found that $\delta = 0.1$ is an appropriate value. It is small enough to accurately detect orthogonality, which is crucial for identifying scale-invariant weights, and large enough to capture all such weights.
	\begin{figure}[htpb]
		\centering
		\includegraphics[width=0.55\linewidth]{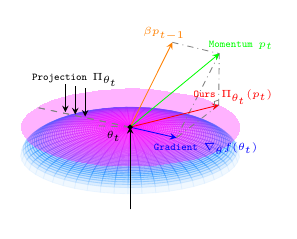}
		\caption{Vector directions of the
			gradient, momentum, and ours.}
		\label{fig2}
	\end{figure}

	The full PadamP algorithm(Alg.\ref{alg:PadamP}) is as follows:
	\begin{algorithm}
		\caption{PadamP}\label{alg:PadamP}
		\begin{algorithmic}[1]
			\Require Learning rate  $\eta_{t}>0$, momentum $0<\beta_{1},\beta_{2}<1$, thresholds $\delta$, $\epsilon>0$, $p\in(0,\frac{1}{2}]$.
			\While{$\boldsymbol{\theta}_{t}$ not converged}
			\State $\boldsymbol{m}_{t}\leftarrow\beta_{1}\boldsymbol{m}_{t-1}+(1-\beta_{1})\nabla_{\boldsymbol{\theta}}f_{t}(\boldsymbol{\theta}_{t})$
			\State $\boldsymbol{v}_{t}\leftarrow\beta_{2}\boldsymbol{v}_{t-1}+(1-\beta_{2})(\nabla_{\boldsymbol{\theta}}f_{t}(\boldsymbol{\theta}_{t}))^{2}$
			\State $\hat{\boldsymbol{m}}_{t}\leftarrow\boldsymbol{m}_{t}/(1-\beta_{1}^{t})$
			\State $\hat{\boldsymbol{v}}_{t}\leftarrow\boldsymbol{v}_{t}/(1-\beta_{2}^{t})$
			\State $\boldsymbol{p}_{t}\leftarrow\hat{\boldsymbol{m}}_{t}/(\hat{\boldsymbol{v}}_{t})^{p}$
			\State $\boldsymbol{q}_{t}\leftarrow\begin{cases}\Pi_{\boldsymbol{\theta}_{t}}(\boldsymbol{p}_{t})&\text{if }\cos(\boldsymbol{\theta}_{t},\nabla_{\boldsymbol{\theta}}f(\boldsymbol{\theta}_{t}))<\delta\eta_{t}/\sqrt{\text{dim}(\boldsymbol{\theta})}\\\boldsymbol{p}_{t}&\text{otherwise}\end{cases}$
			\State $\boldsymbol{\theta}_{t + 1}\leftarrow\boldsymbol{\theta}_{t}-\eta_{t}\boldsymbol{q}_{t}$
			\EndWhile
		\end{algorithmic}
	\end{algorithm}

	
	In our algorithm, the update direction $\boldsymbol{p}_{t}$ is adjusted based on the $p$-th power of the second-order moment $\hat{\boldsymbol{v}}_{t}$, and the projection operation $\Pi_{\boldsymbol{\theta}_{t}}(\cdot)$ is applied to control the growth of the weight norms. This combination allows PadamP to adaptively adjust the learning rate while maintaining the stability and generalization performance similar to SGD with momentum.
	
	PadamP changes the projection discrimination condition by adding a learning rate $\eta_{t}$, which will make the projection discrimination condition more and more stringent, and the projection is less and less necessary at the later stages of the update when the objective function is getting closer and closer to the optimal point, because at this point the momentum leads to a smaller and smaller paradigm, which reduces the amount of computation and speeds up the rate of convergence.
\subsection{Assumptions and Convergence Analysis}
Based on the demanding of our convergence analysis, 
the assumptions and lemmas are directly listed as follows.
\begin{myAss}\label{ass31}
	There exists a constant $C_{1}$, for all $t \in \mathcal{T}$,  
	$\| \nabla f(\theta_{t}) \| \leqslant C_{1}$, $\| g_{t} \| \leqslant C_{1}$.
	%
\end{myAss}
\begin{myAss}\label{ass32}
	$ f(\theta)=\mathbb{E}_{\xi} f(\theta ; \xi) $  is  L  smooth: for any  $ \theta_{1} $, $ \theta_{2} \in \mathbb{R}^{d} $ , it satisfied that  $ \mid f(\theta_{1})-f(\theta_{2})-   \langle\nabla f(\theta_{2}), \theta_{1}-\theta_{2}\rangle \left\lvert\, \leq \frac{L}{2}\|\theta_{1}-\theta_{2}\|_{2}^{2}\right.  $
	
	Assumption \ref{ass32} is frequently used in analysis of gradientbased algorithms. It is equivalent to the  L -gradient Lipschitz condition, which is often written as  $ \|\nabla f(\theta_{1})-\nabla f(\theta_{2})\|_{2} \leq   L\|\theta_{1}-\theta_{2}\|_{2} $. 
\end{myAss}

\begin{myAss}\label{ass33}
	The learning rate $ \eta _t $ is a non-increasing positive term series and satisfies the following conditions:
	\begin{equation}\label{eq18}
		\begin{aligned}
			\sum\limits_{n = 0}^\infty  {{\eta_{t}}}  = \infty ,\\
			\sum\limits_{n = 0}^\infty  {\eta_{t}^2}  < \infty .
		\end{aligned}
	\end{equation}
\end{myAss}
\begin{myAss}\label{ass34}
	$ f $ is lower bounded: $ f\left(\theta^{*}\right)=f^{*}>-\infty $ ,where $ \theta^{*} $ is an optimal solution.
\end{myAss}
\begin{myAss}\label{ass35}
	The noisy gradient $ g_{t} $  is unbiased and the noise is independent: $ g_{t}=\nabla f\left(x_{t}\right)+\zeta_{t}, \mathbb{E}\left[\zeta_{t}\right]=0, $ and $ \zeta_{i} $ is independent of $ \zeta_{j} $ if $ i=j $.
\end{myAss}

Next we provide the main convergence rate result for our proposed algorithm. The detailed proof can be find in the longer version of this paper.

\begin{theorem}\label{th3.1}
	For PadamP, if $ f(\theta) $ satisfies Assumption \ref{ass32} and has a lower bound on its domain of definition, the stochastic gradient $ g_{t} $ satisfies Assumption \ref{ass31}, and the learning rate $\eta_{t}  \le 1$ and satisfies Assumption \ref{ass33}, $ \beta_{1 t}=\beta_{1} \lambda^{t-1}, 0 < \lambda < 1 $, then there are
	\begin{equation}\label{eq20}
		\lim _{T \rightarrow \infty} \min _{t=1: T} \mathbb{E}\left[\left\|g_{t}\right\|^{2}\right] = 0.
	\end{equation}

\end{theorem}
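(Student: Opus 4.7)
The plan is to follow the standard non-convex descent-lemma framework for Adam-type methods, adapted to the projected update and to the $p$-th power preconditioner. Starting from Assumption \ref{ass32} applied to consecutive iterates $\theta_t$ and $\theta_{t+1} = \theta_t - \eta_t q_t$, one gets
\begin{equation*}
f(\theta_{t+1}) \le f(\theta_t) - \eta_t \langle \nabla f(\theta_t), q_t\rangle + \frac{L\eta_t^2}{2}\|q_t\|^2.
\end{equation*}
The aim is to convert this into an expectation inequality of the form $\mathbb{E}[f(\theta_{t+1})] \le \mathbb{E}[f(\theta_t)] - c\,\eta_t\,\mathbb{E}[\|\nabla f(\theta_t)\|^2/(\hat v_t)^p] + R_t$ with $\sum_t R_t < \infty$, and then telescope from $t=1$ to $T$.

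The first step is to neutralize the projection. By Line~8 of Algorithm~\ref{alg:PadamP}, $q_t$ equals either $p_t$ or $\Pi_{\theta_t}(p_t) = p_t - \langle \hat\theta_t, p_t\rangle \hat\theta_t$. In both cases $\|q_t\|\le\|p_t\|$, so the quadratic term is harmlessly bounded by $\|p_t\|^2$. For the inner product, the cosine-similarity test of Line~7 ensures that whenever the projection fires, $|\langle \hat\theta_t,\nabla f(\theta_t)\rangle|$ is at most $\delta\eta_t\|\nabla f(\theta_t)\|/\sqrt{\dim(\theta)}$, which means $\langle \nabla f(\theta_t), \Pi_{\theta_t}(p_t)\rangle$ agrees with $\langle \nabla f(\theta_t), p_t\rangle$ up to a term of order $\eta_t\|\nabla f(\theta_t)\|\|p_t\|$; this error, multiplied by the outer $\eta_t$, is absorbable into the $O(\eta_t^2)$ residual. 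Hence one can reduce to analyzing the unprojected update $p_t = \hat m_t/(\hat v_t)^p$ throughout.

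The second step is to control $\hat m_t$ against $\nabla f(\theta_t)$. Unrolling the exponential moving average with time-varying $\beta_{1t} = \beta_1\lambda^{t-1}$, one writes $\hat m_t = \nabla f(\theta_t) + (\hat m_t - \nabla f(\theta_t))$ and bounds the bias by a geometric sum of past stochastic gradients whose prefactor is driven to zero by $\lambda^{t-1}$. This decaying schedule is precisely what allows one to dispense with the classical constraint $\beta_1/\sqrt{\beta_2}<1$: the bias vanishes for reasons independent of $\beta_2$, so the two exponents decouple. The zero-mean noise from Assumption \ref{ass35} is handled by conditioning on the filtration generated by $\{g_1,\dots,g_{t-1}\}$, so cross terms of the form $\langle \nabla f(\theta_t), \zeta_t\rangle$ drop out in expectation, while second moments enter only through $\|q_t\|^2$, which is bounded via Assumption \ref{ass31}. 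Telescoping, invoking Assumption \ref{ass34} on the left, and using $\sum \eta_t^2 < \infty$ from Assumption \ref{ass33} on the right yields $\sum_{t=1}^{\infty} \eta_t\, \mathbb{E}[\|\nabla f(\theta_t)\|^2/(\hat v_t)^p] < \infty$; combined with the uniform bound $(\hat v_t)^p \le C_1^{2p}$ from Assumption \ref{ass31} this gives $\sum_t \eta_t\, \mathbb{E}[\|\nabla f(\theta_t)\|^2] < \infty$, and $\sum_t \eta_t = \infty$ forces $\min_{t\le T} \mathbb{E}[\|g_t\|^2] \to 0$.

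The main obstacle will be the element-wise preconditioner $(\hat v_t)^p$: unlike a scalar step-size, it destroys the clean identity $\langle \nabla f(\theta_t), p_t\rangle \propto \|\nabla f(\theta_t)\|^2$, and for general $p \in (0, 1/2]$ one cannot appeal to $p=1/2$ simplifications used in the AMSGrad-style analyses. The cleanest route is to introduce an auxiliary surrogate sequence using $(\hat v_{t-1})^p$ (which is $\mathcal F_{t-1}$-measurable, so it commutes with the conditional expectation of $g_t$), and then separately control $|(\hat v_t)^{-p} - (\hat v_{t-1})^{-p}|$ via the recursion for $\hat v_t$, showing that the resulting perturbation telescopes or is $O(\eta_t^2)$. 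Closing this estimate while keeping the argument uniform in $\beta_2$ is the heart of the decoupling claim advertised by the theorem.
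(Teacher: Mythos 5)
Your skeleton matches the paper's: descent lemma on $\theta_{t+1}=\theta_t-\eta_t q_t$, neutralizing the projection through the cosine threshold (the cross term it creates is bounded by $\eta_t^2\delta C_1^2 D/\epsilon^{p}$ and absorbed into the summable residual), exploiting the geometric decay $\beta_{1t}=\beta_1\lambda^{t-1}$ to make the momentum bias summable, and closing with the two conditions of Assumption \ref{ass33}. Two sub-steps differ. First, for the momentum bias you unroll the moving average completely; the paper instead uses the one-step identity $-m_t=-g_t+\frac{\beta_{1t}}{1-\beta_{1t}}(m_t-m_{t-1})$ (Lemma \ref{lem2}) and bounds the correction crudely by $\frac{\beta_{1t}}{1-\beta_{1t}}\cdot\frac{2C_1^2}{\epsilon^{p}}$, which is summable thanks to the $\lambda^{t-1}$ decay --- the same mechanism with lighter bookkeeping. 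Second, and more substantively, the step you single out as the heart of the argument (decorrelating $g_t$ from $(\hat v_t)^{p}$ via an $\mathcal{F}_{t-1}$-measurable surrogate and a drift bound on $(\hat v_t)^{-p}-(\hat v_{t-1})^{-p}$) is not what the paper does: under Assumption \ref{ass31} one has $0\le v_t\le C_1^2$ (Lemmas \ref{lem3} and \ref{lem4}), so the preconditioner is uniformly sandwiched, $(C_1^2+\epsilon)^{-p}\le (v_{t,i}+\epsilon)^{-p}\le\epsilon^{-p}$, and the paper simply replaces it by these constants on each side of the inner product. That uniform sandwich is the entire source of the advertised $\beta_1$/$\beta_2$ decoupling --- $\beta_2$ only ever enters through $v_t\le C_1^2$ --- so your surrogate construction is more machinery than the paper needs (though it would be the right tool if the bounded-gradient assumption were dropped). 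One caveat on your last line: your telescoping yields $\sum_t\eta_t\,\mathbb{E}[\|\nabla f(\theta_t)\|^2]<\infty$, which controls $\min_t\mathbb{E}[\|\nabla f(\theta_t)\|^2]$ rather than $\min_t\mathbb{E}[\|g_t\|^2]$; under Assumption \ref{ass35} the latter exceeds the former by the noise variance, so the final jump in your sentence is a gap --- albeit one the paper's own proof also glosses over when it lower-bounds $\mathbb{E}\bigl[\langle\nabla f(\theta_t),g_t/(v_t+\epsilon)^{p}\rangle\bigr]$ by $\mathbb{E}[\|g_t\|^2]/(C_1^2+\epsilon)^{p}$.
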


	\section{Experiments}
	\label{}
	In this section, we conduct several experiments of image classification task
	on the benchmark dataset CIFAR-10 and CIFAR-100\cite{ref26} using popular 
	network VGG-16\cite{ref27} and ResNet-18\cite{ref28} to numerically compare 
	PadamP with other optimizers.
	Specifically, VGG-16 has trained on CIFAR-10 and ResNet-18 has trained on CIFAR-100.
	
	The CIFAR-10 and CIFAR-100 dataset both consist of 60000 colour images 
	with 10 and 100 classes respectively. Both two datasets are 
	divided into training and testing datasets, which includes 50000 training images and 
	10000 test images respectively.
	The test batch contains exactly 100 or 1000 randomly-selected images from each class.
	
	VGG-16, proposed by Karen Simonyan\cite{ref27} in 2014, consisting of 16 
	convolution layers and 3 fully-connected layers, is a concise network structure 
	made up of 5 vgg-blocks with consecutive 3$\times$3 convolutional kernels in each vgg-block.
	The final fully-connected layer is 1000-way with a softmax function.
	The ResNet-18 network, proposed by He et al.\cite{ref28}, incorporates residual unit through a short-cut connection, 
	enhancing the network's learning ability. ResNet-18 is organized as a
	7$\times$7 convolution layer, four convolution-blocks including total 32 convolution layers 
	with 3$\times$3 convolutional kernels and finally a 1000-way-fully-connected layer 
	with a softmax function. VGG-16 and ResNet-18 have a strong ability to extract image features.
	All networks have trained for 200 epochs on NVIDIA Tesla V100 GPU. The code and relevant materials for the experiment can be downloaded from https://github.com/zhang-xiaowei/Code.
	
	The default parameter settings for each optimizer are shown in the table \ref{tab:optimizer_params}. The cross entropy is used as the loss function for all experiments for 
	the reason of commonly used strategy in image classification.
	Besides, batch size is set 128 as default.In addition to that, we do some processing on the image, including random crop (RandomCrop(32, padding=4)), RandomHorizontalFlip(), ToTensor() operation, and normalisation operation (Normalize((0.4914, 0.4822, 0.4465), (0.2023, 0.1994, 0.2010))).
	
	\begin{table}[htbp]
		\caption{optimizer hyperparameters}
		\centering
		\begin{tabular}{l|ccccc}
			\hline
			Optimizer & PadamP & AdamP & Adam & AMSgrad & sgdm \\
			\hline
			Initial lr & 1e-3 & 1e-3 & 1e-3 & 1e-3 & 0.1 \\
			beta1 & 0.9 & 0.9 & 0.9 & 0.9 &-\\
			beta2 & 0.999 & 0.999 & 0.99 & 0.99 &-\\
			weight decay & 1e-2 & 1e-2 & 1e-4 & 1e-4 & 5e-4 \\
			momentum &-&-&-&-& 0.9 \\
			\hline
		\end{tabular}
		\label{tab:optimizer_params}
	\end{table}
	\subsection{Selection of the optimal learning rate}
	We first choose the optimal learning rate setting for the PadamP algorithm. All the algorithms are run under different learning rates 
	$\eta_{t} \in \{1e-1,5e-2,1e-2,5e-3,1e-3,5e-4,1e-4\}, t \in \mathcal{T}$, and decay the learning rate
	by $  0.1  $ at the $ 50 $th, $ 100 $th and $ 150 $th epoch.
	Figure \ref{fig3}
	shows that we can choose a learning rate of $ 1e-3 $ as our default setting for the learning rate.
	\begin{figure}
		\centering
		\begin{minipage}[t]{1\linewidth}
			\centering
			\subfloat[Train Loss]{\includegraphics[width= 7cm,height=4.5cm]{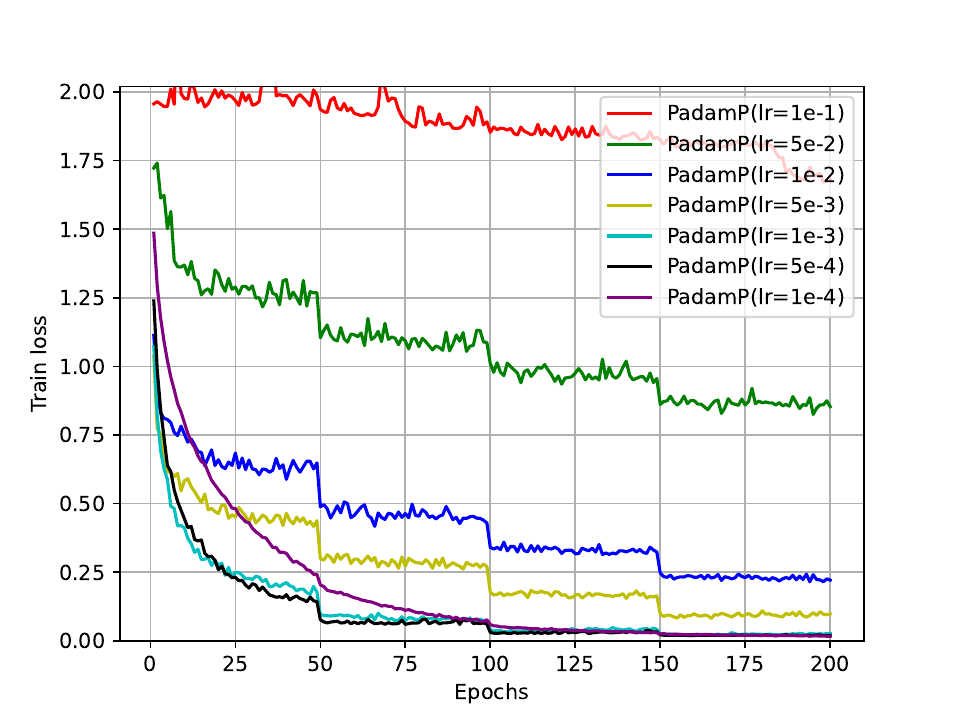}}\\
			\subfloat[Train Accuracy]{\includegraphics[width= 7cm,height=4.5cm]{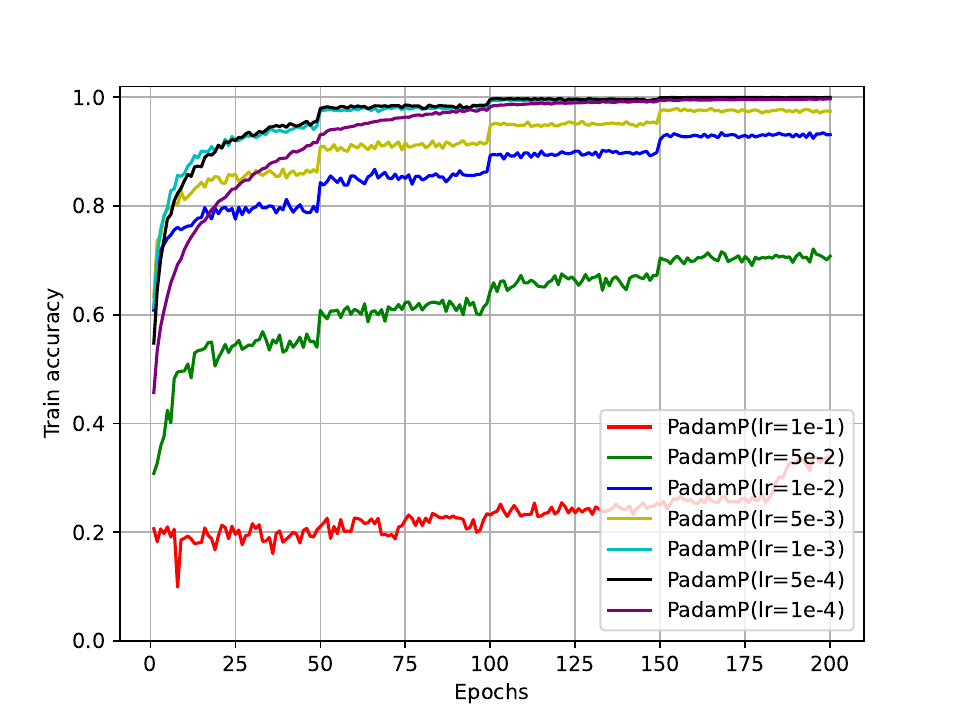}}\\
			\subfloat[Test Accuracy]{\includegraphics[width= 7cm,height=4.5cm]{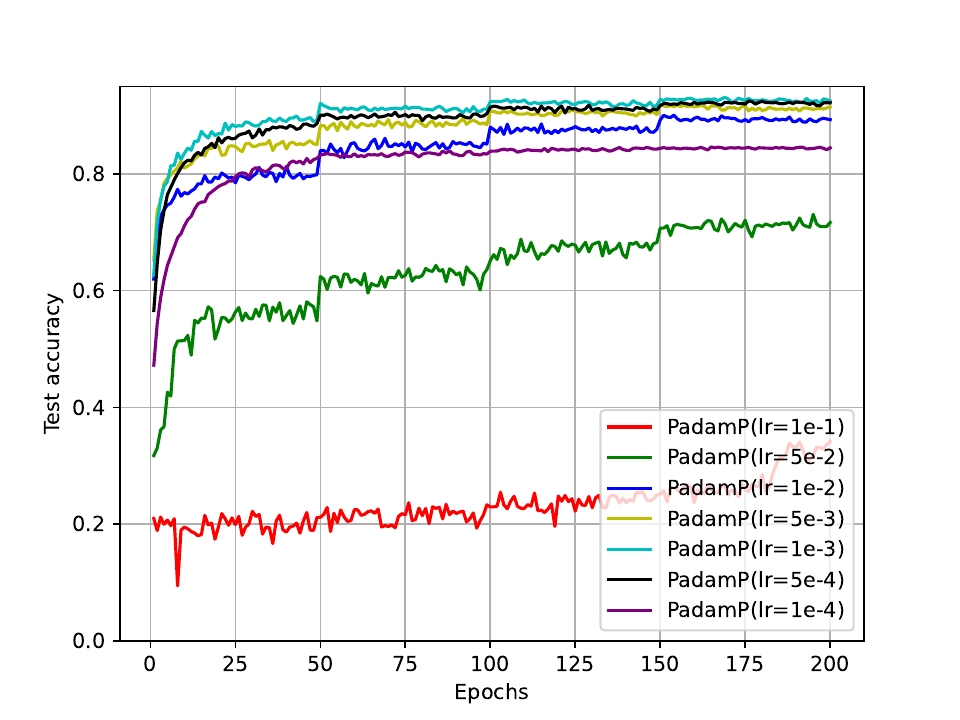}}
			\caption{PadamP under different learning rates. (train VGG-16 on CIFAR-10)}
			\label{fig3}
		\end{minipage}%
	\end{figure}
	

	\subsection{Compare PadamP with other optimizers}
	We firstly compare PadamP with oneself. All the algorithms are run under different $ p \in \{1/4,1/5,1/8\}, \forall t \in \mathcal{T}$.
	From the figure \ref{fig4}-\ref{fig7}, when we choose the VGG-16 network, the optimal $ p $-value can be chosen as $ 1/5 $. And when choose ResNet-18 network, the optimal $ p $-value can be chosen as $ 1/4 $.
	
	\begin{figure}
		\begin{minipage}[t]{0.5\linewidth}
			\centering
			\subfloat[Train Loss]{\includegraphics[width= 7cm,height=4.5cm]{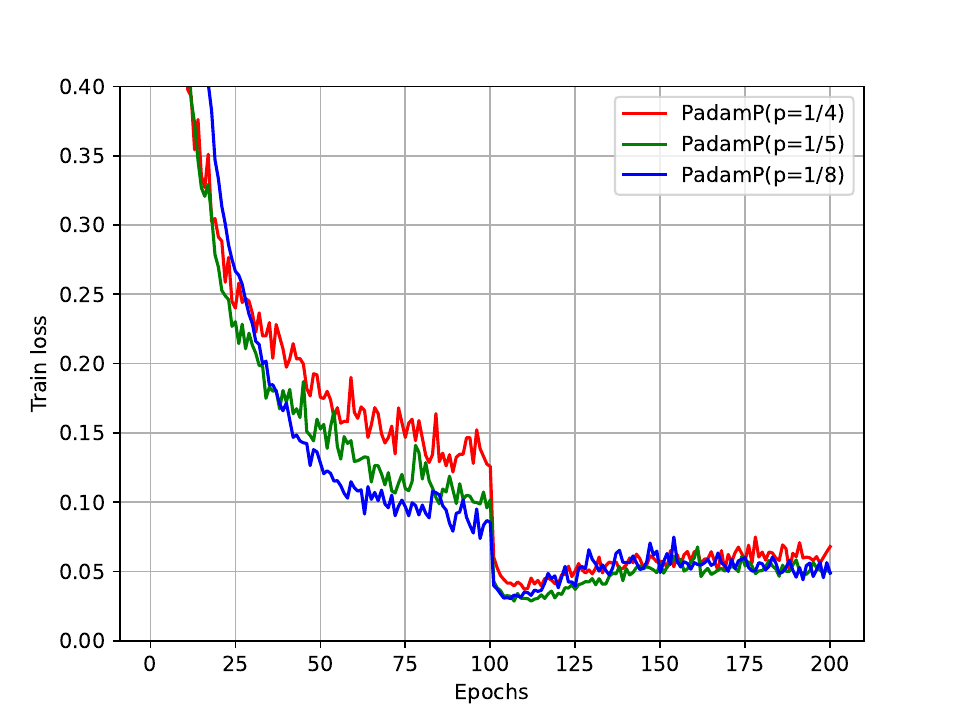}}\\
			\subfloat[Train Accuracy]{\includegraphics[width= 7cm,height=4.5cm]{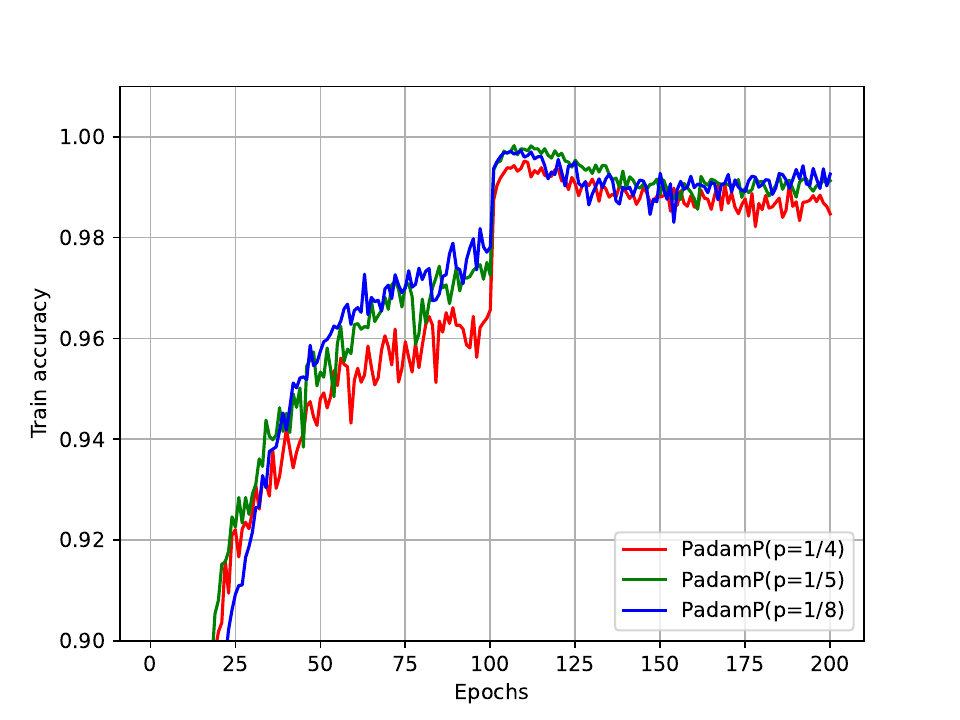}}\\
			\subfloat[Test Accuracy]{\includegraphics[width= 7cm,height=4.5cm]{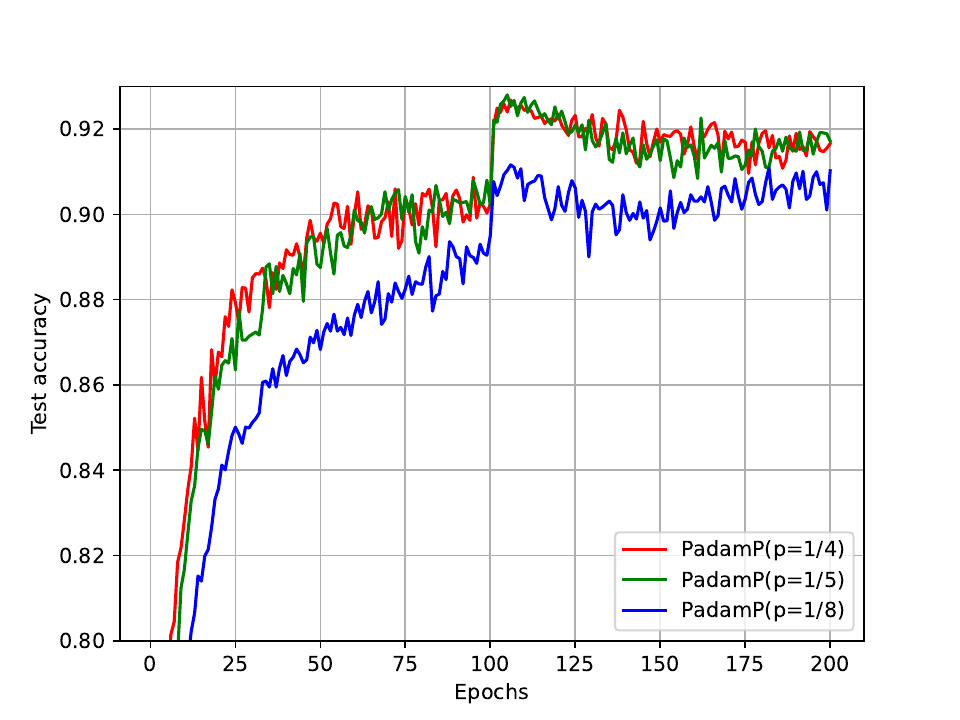}}
			\caption{Different p-values trained on VGG-16 network, CIFAR-10 data(learning rate not decay).}
			\label{fig4}
		\end{minipage}%
		\hspace{0.5cm}
		\begin{minipage}[t]{0.5\linewidth}
			\centering
			\subfloat[Train Loss]{\includegraphics[width= 7cm,height=4.5cm]{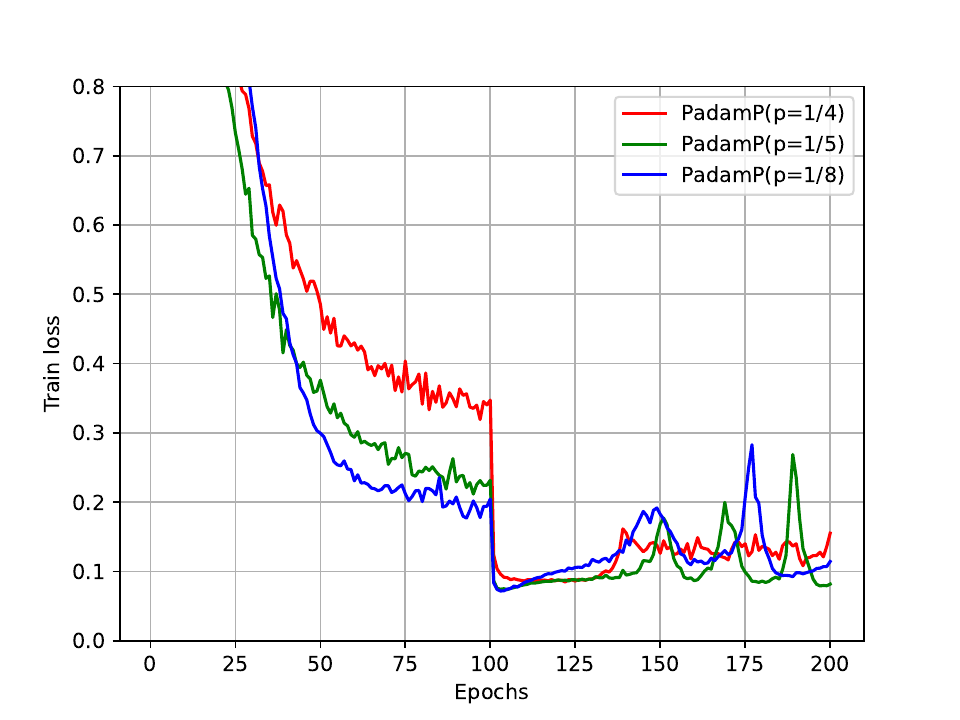}}\\
			\subfloat[Train Accuracy]{\includegraphics[width= 7cm,height=4.5cm]{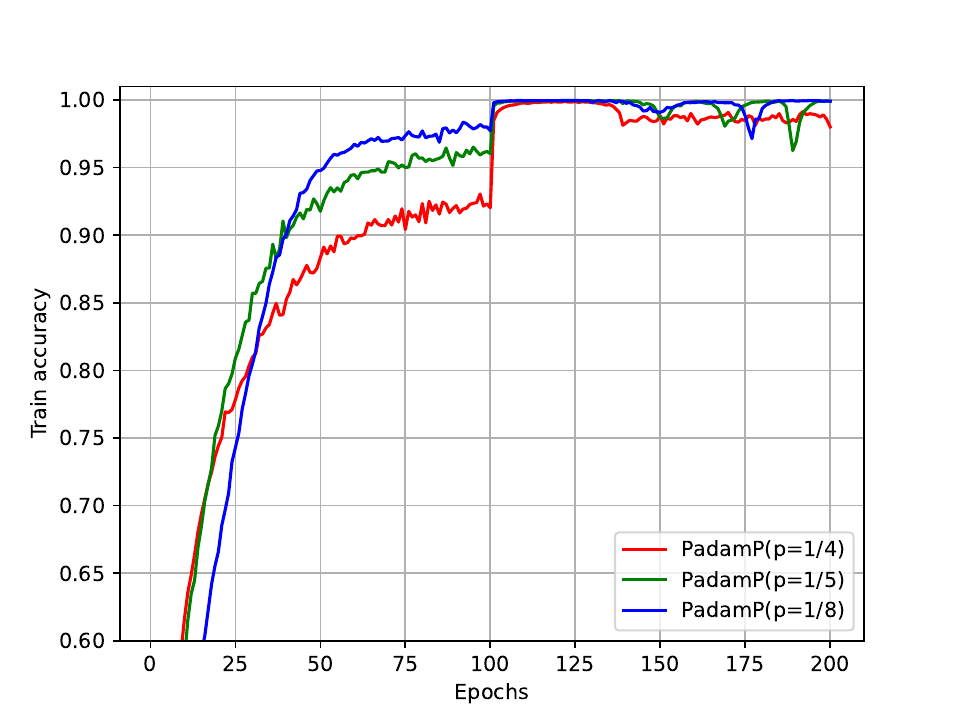}}\\
			\subfloat[Test Accuracy]{\includegraphics[width= 7cm,height=4.5cm]{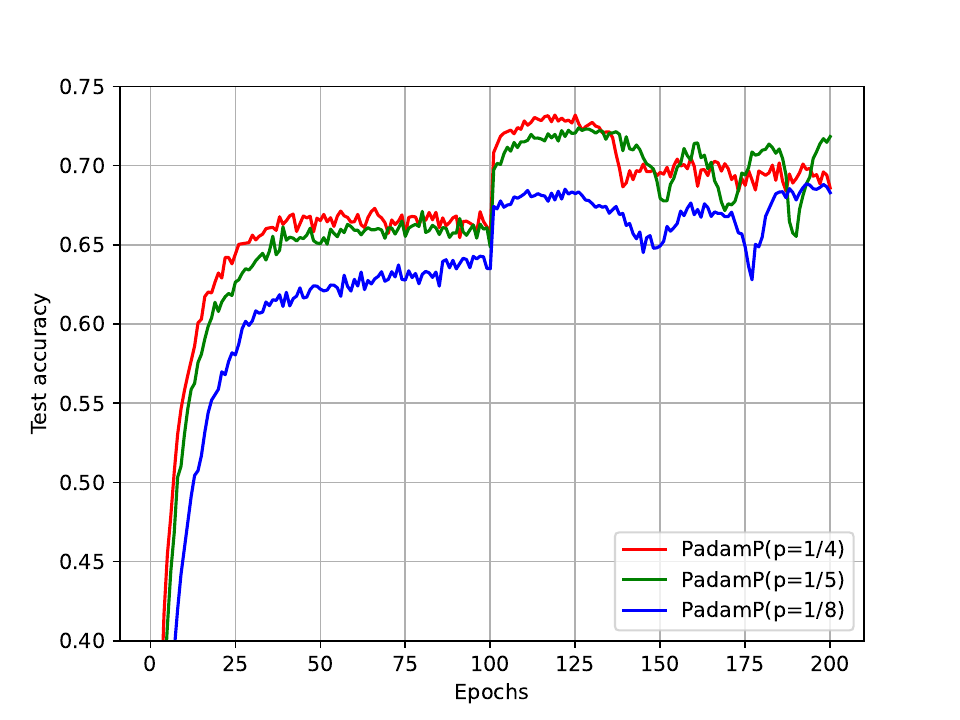}}
			\caption{Different p-values trained on VGG-16 network, CIFAR-100 data(learning rate not decay)}
			\label{fig5}
		\end{minipage}
	\end{figure}
	
	\begin{figure}
		\begin{minipage}[t]{0.5\linewidth}
			\centering
			\subfloat[Train Loss]{\includegraphics[width= 7cm,height=4.5cm]{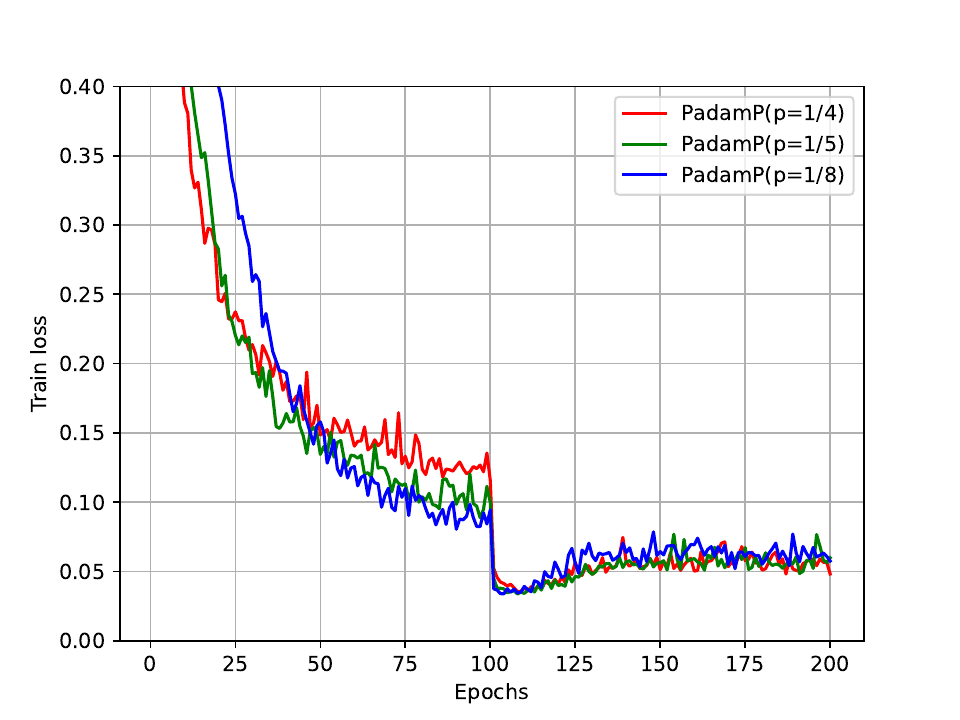}}\\
			\subfloat[Train Accuracy]{\includegraphics[width= 7cm,height=4.5cm]{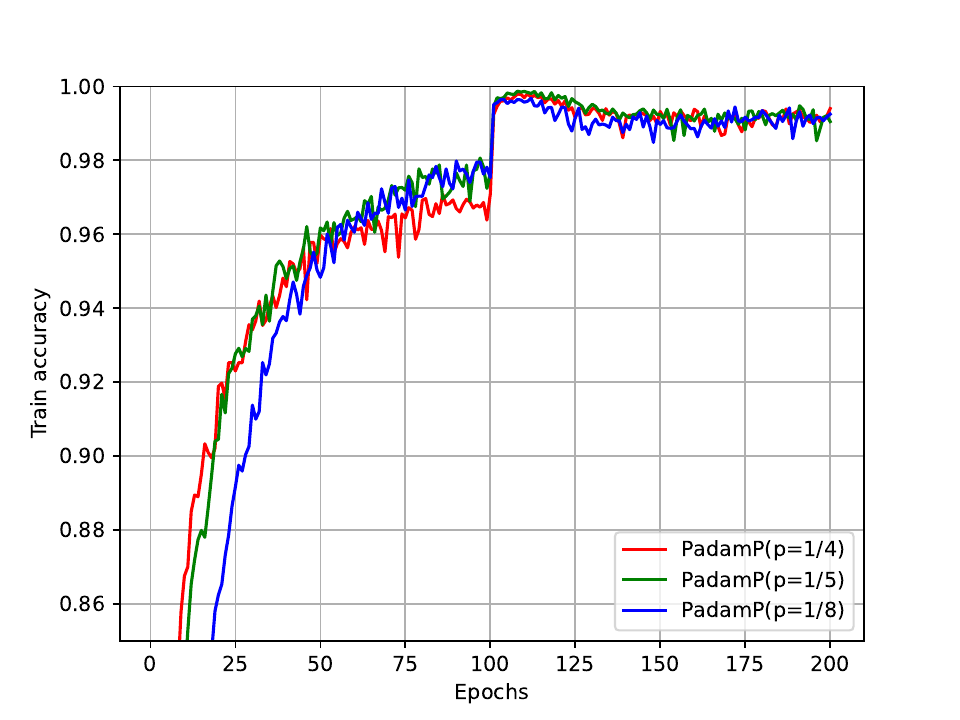}}\\
			\subfloat[Test Accuracy]{\includegraphics[width= 7cm,height=4.5cm]{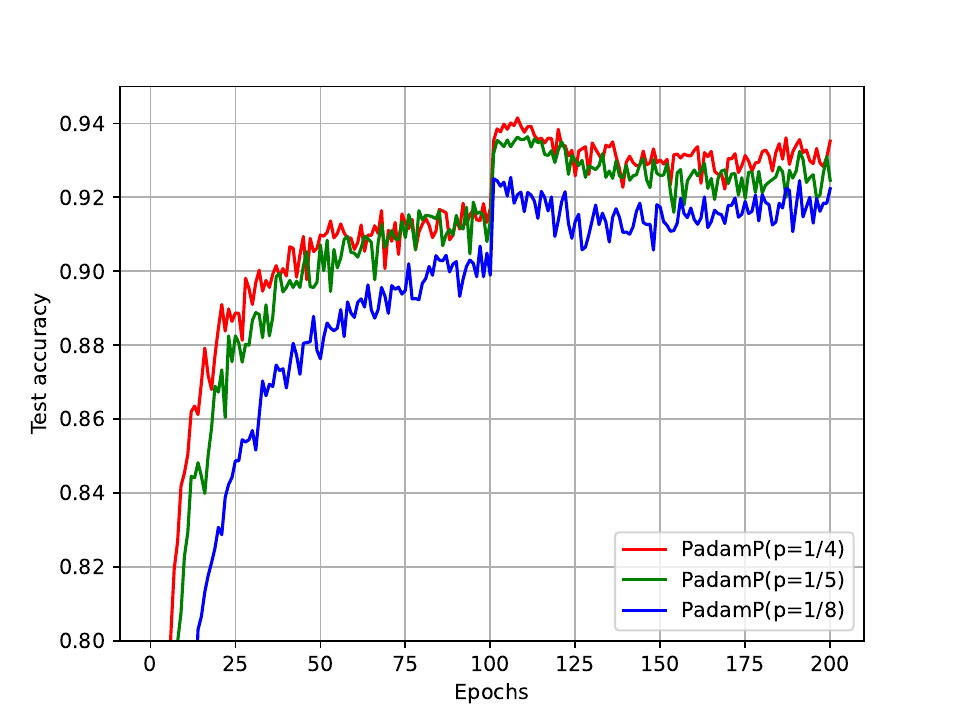}}
			\caption{Different p-values trained on ResNet-18 network, CIFAR-10 data (learning rate not decay).}
			\label{fig6}
		\end{minipage}%
		\hspace{0.5cm}
		\begin{minipage}[t]{0.5\linewidth}
			\centering
			\subfloat[Train Loss]{\includegraphics[width= 7cm,height=4.5cm]{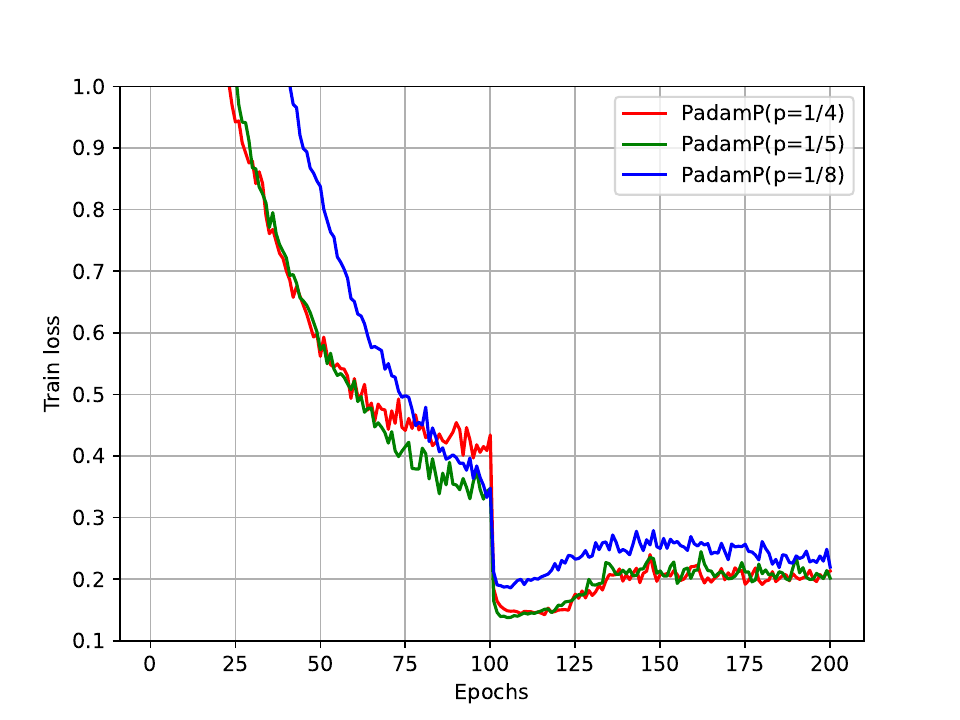}}\\
			\subfloat[Train Accuracy]{\includegraphics[width= 7cm,height=4.5cm]{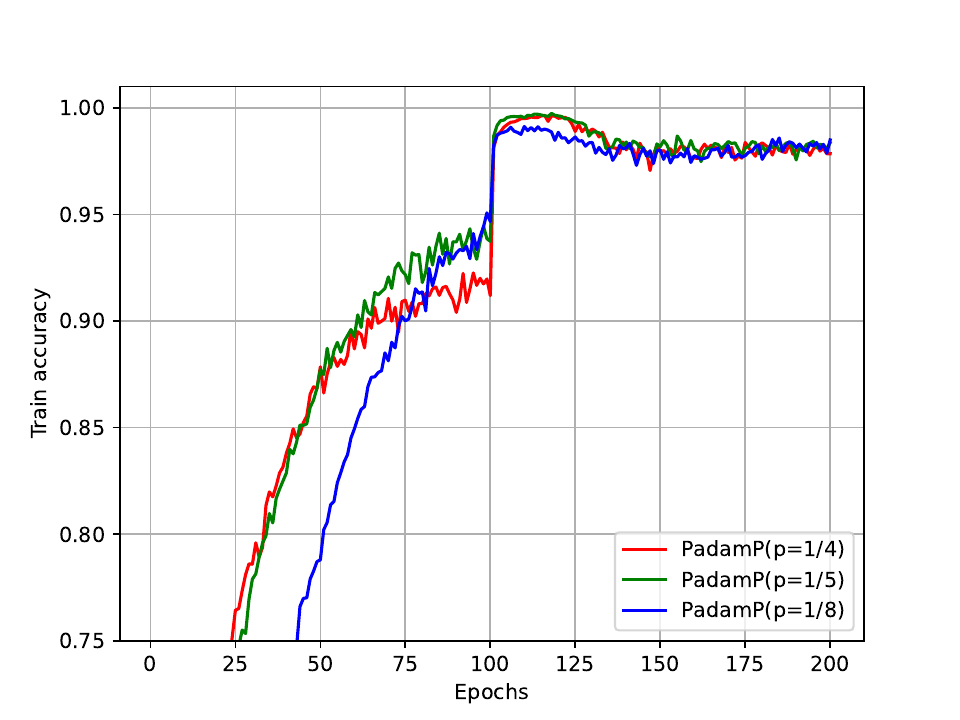}}\\
			\subfloat[Test Accuracy]{\includegraphics[width= 7cm,height=4.5cm]{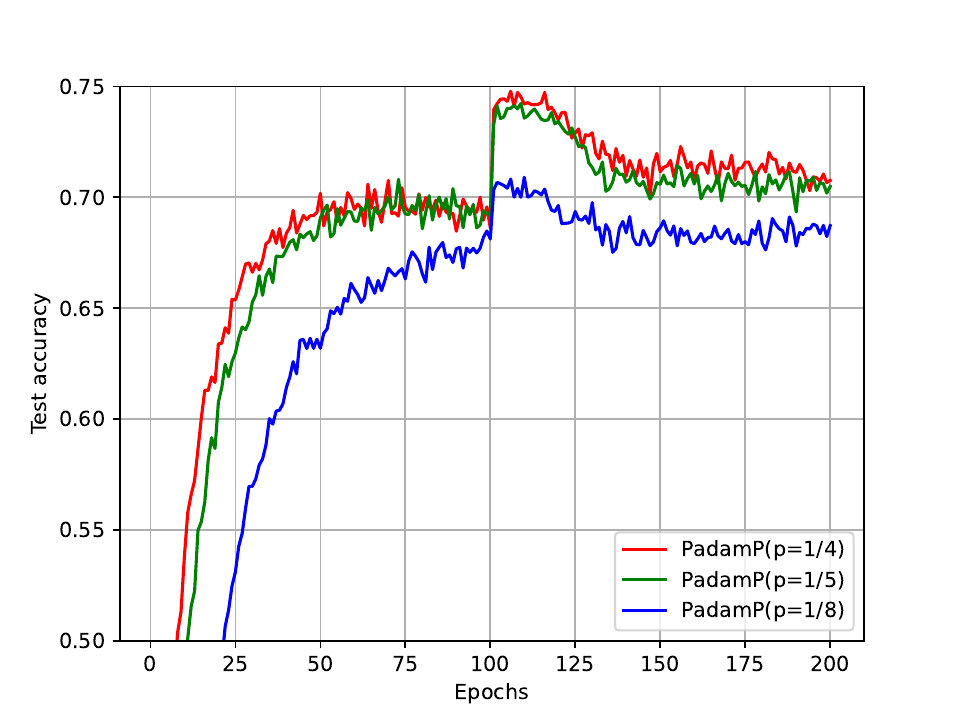}}
			\caption{Different p-values trained on ResNet-18 network, CIFAR-100 data (learning rate not decay)}
			\label{fig7}
		\end{minipage}
	\end{figure}

	In the initial learning rate decay strategy used in most of the work \cite{ref32}\cite{ref33}\cite{ref34}, we think it is a trick, but it is not a good guarantee for whether the algorithm itself is effective or not. So we do not use the learning rate decay strategy when comparing PadamP with optimizers such as Adam, but used a constant learning rate. We then find that the experimental results were not very satisfactory. However, we also find some laws: when $ p=1/4 $ and $ p=1/5 $, the test accuracy and train loss of PadamP in the first half of the iteration (before $ 100 $ epochs) are very good, and the train loss in the second half of the iteration (after $ 100 $ epochs) is very average; however, in the second half of the iteration (after $ 100 $ epochs), the train loss of PadamP is very good at $ p=1/8 $, which inspires us to use the learning rate decay strategy to compare PadamP with Adam and other optimizers. The law inspires us that we can adjust the $ p $-value in an adaptive way, which has achieved better experimental results. We do the following experiment: at the $ 100th $ epoch, $ p $  reduced from $ 1/4 $ to $ 1/8 $, from $ 1/5 $ to $ 1/10 $, and from $ 1/8 $ to $ 1/16 $, with the initial learning rate constant.
	
	\begin{figure}
		\begin{minipage}[t]{0.5\linewidth}
			\centering
			\subfloat[Train Loss]{\includegraphics[width= 7cm,height=4.5cm]{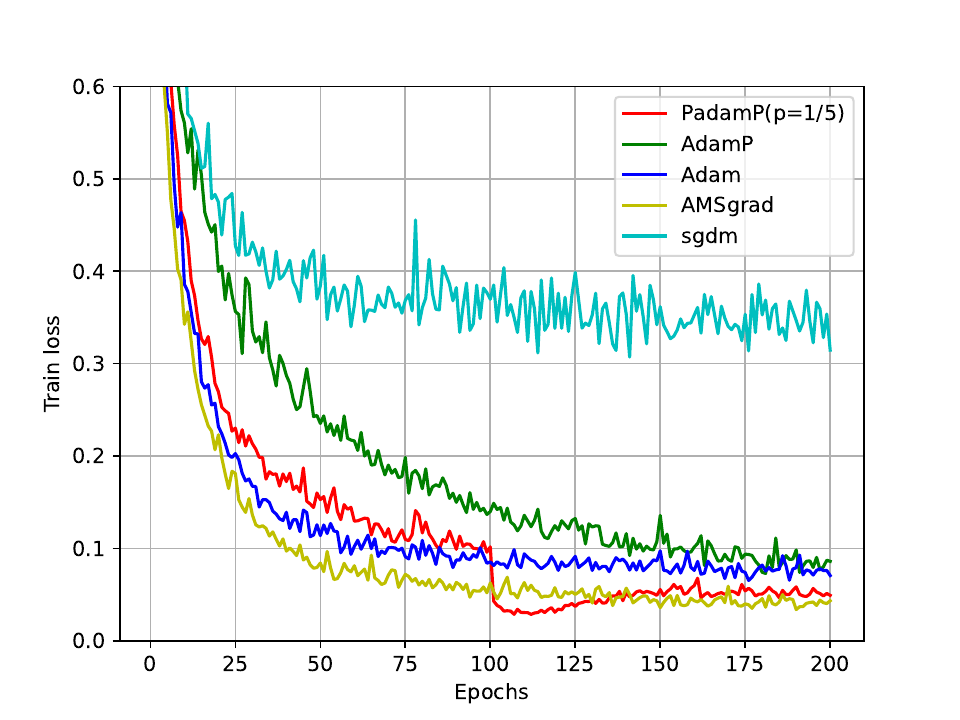}}\\
			\subfloat[Train Accuracy]{\includegraphics[width= 7cm,height=4.5cm]{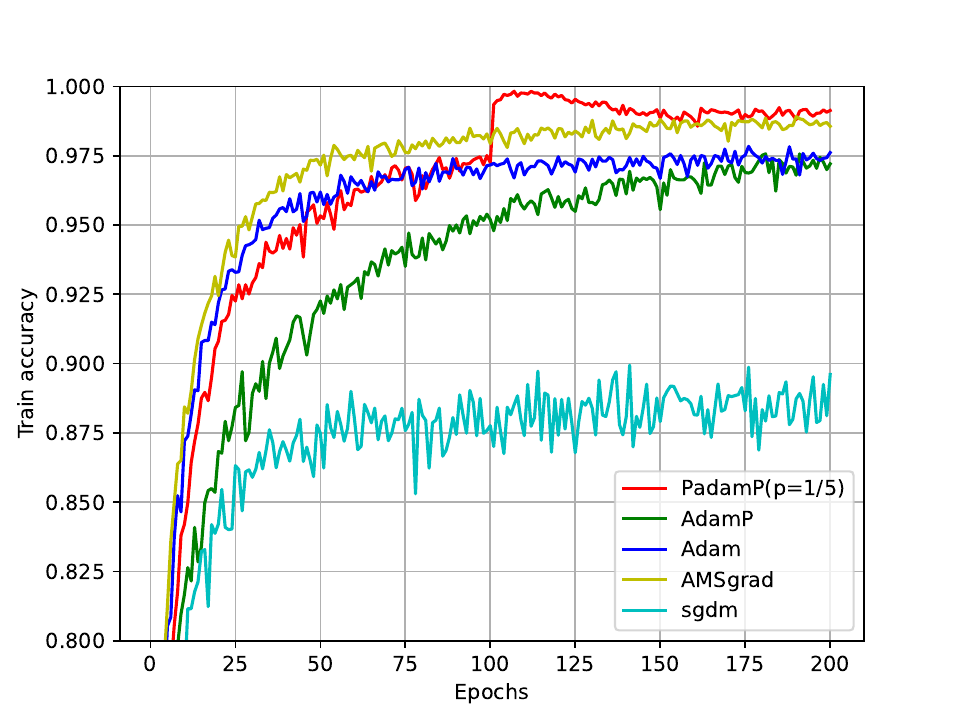}}\\
			\subfloat[Test Accuracy]{\includegraphics[width= 7cm,height=4.5cm]{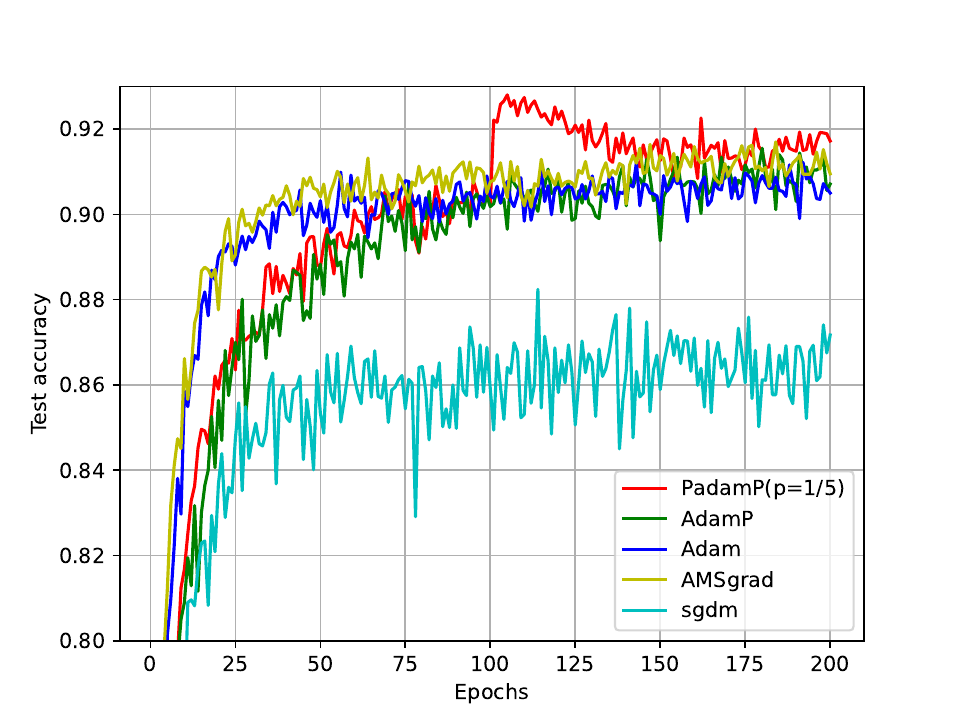}}
			\caption{PadamP V.S. other optimizers. Train on VGG-16 network, CIFAR-10 data(learning rate not decay).}
			\label{fig8}
		\end{minipage}%
		\hspace{0.5cm}
		\begin{minipage}[t]{0.5\linewidth}
			\centering
			\subfloat[Train Loss]{\includegraphics[width= 7cm,height=4.5cm]{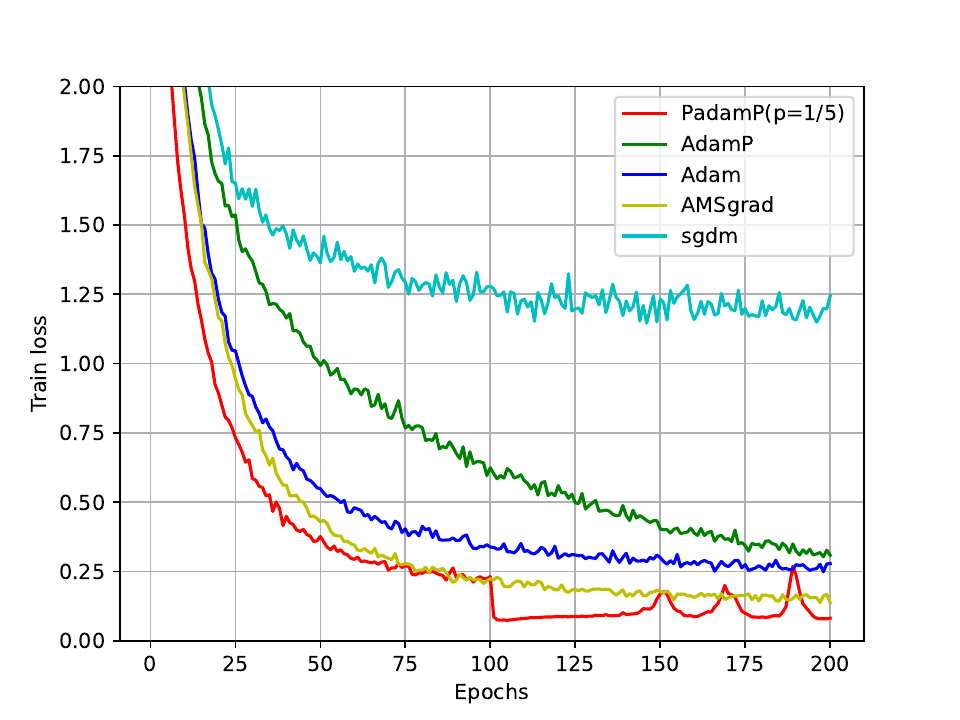}}\\
			\subfloat[Train Accuracy]{\includegraphics[width= 7cm,height=4.5cm]{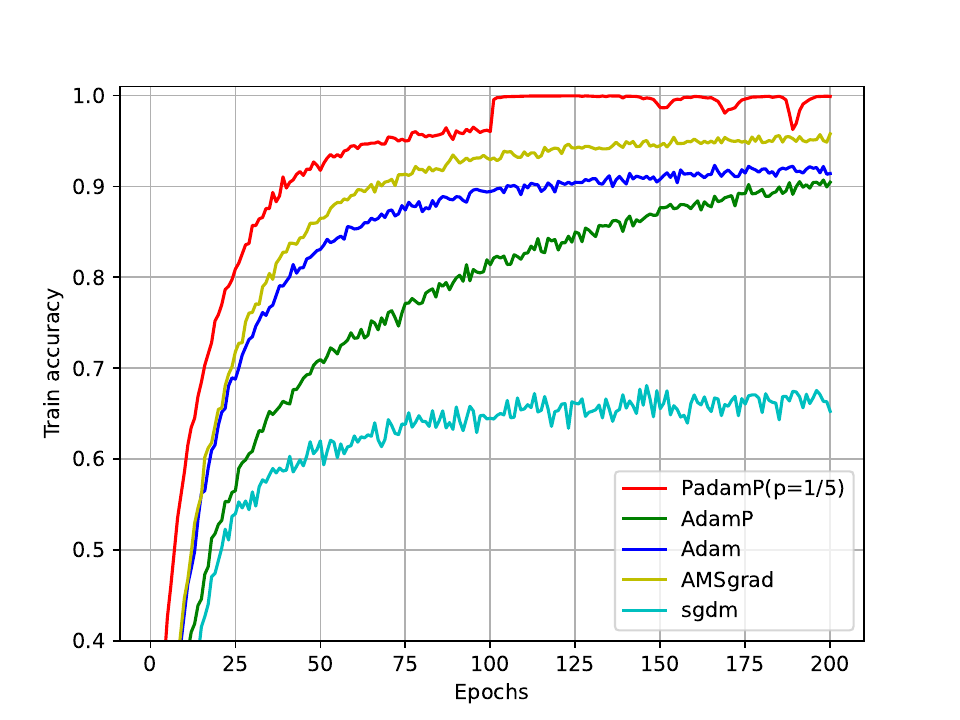}}\\
			\subfloat[Test Accuracy]{\includegraphics[width= 7cm,height=4.5cm]{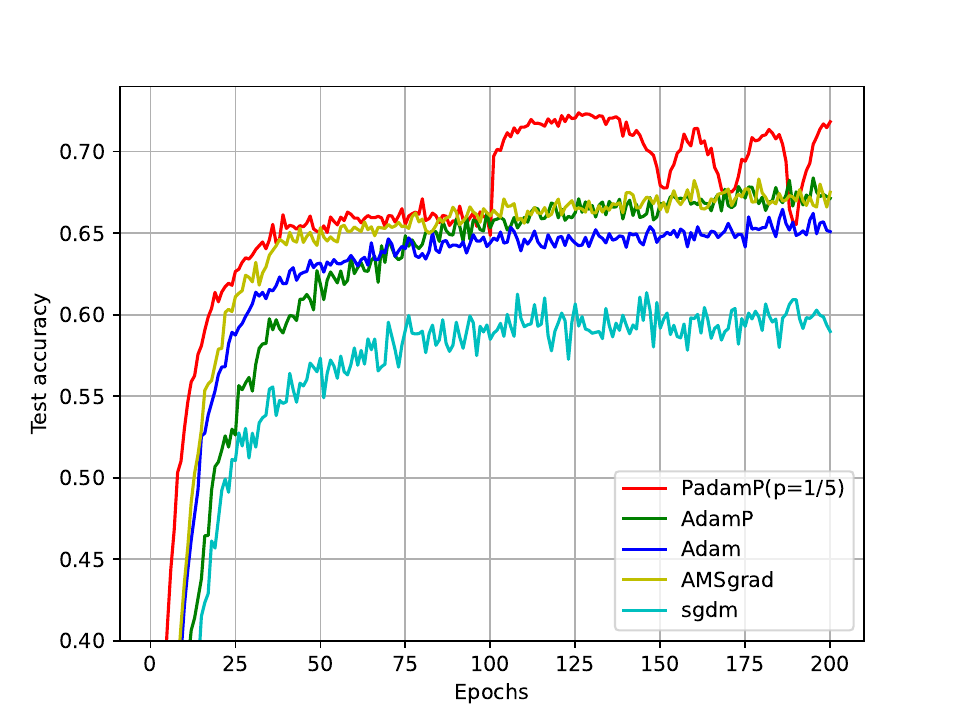}}
			\caption{PadamP V.S. other optimizers. Train on VGG-16 network, CIFAR-100 data(learning rate not decay).}
			\label{fig9}
		\end{minipage}
	\end{figure}
	
	\begin{figure}
		\begin{minipage}[t]{0.5\linewidth}
			\centering
			\subfloat[Train Loss]{\includegraphics[width= 7cm,height=4.5cm]{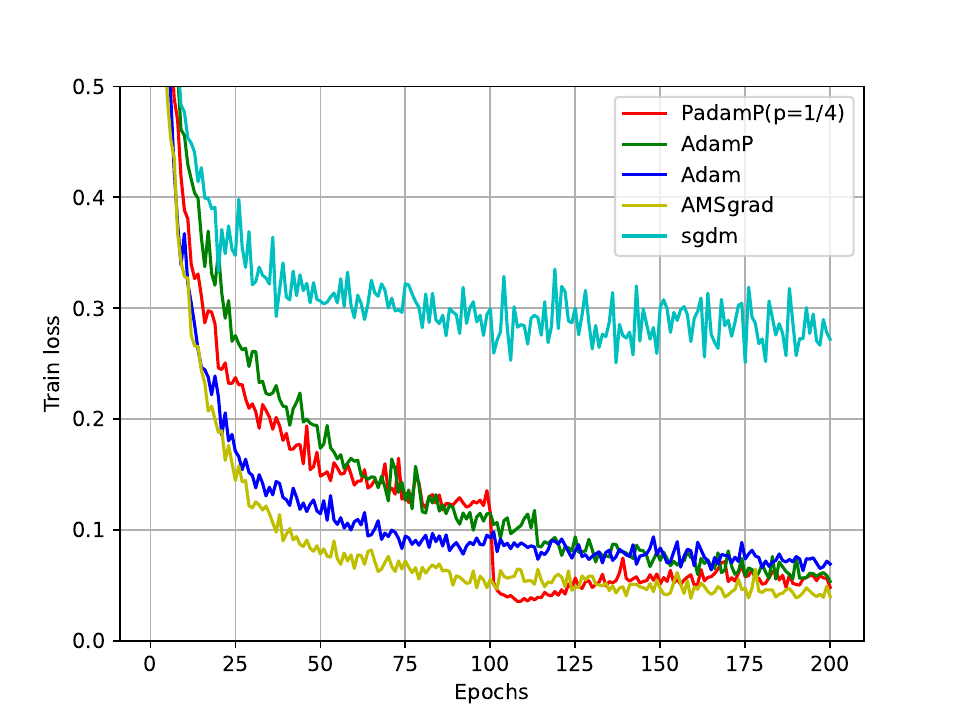}}\\
			\subfloat[Train Accuracy]{\includegraphics[width= 7cm,height=4.5cm]{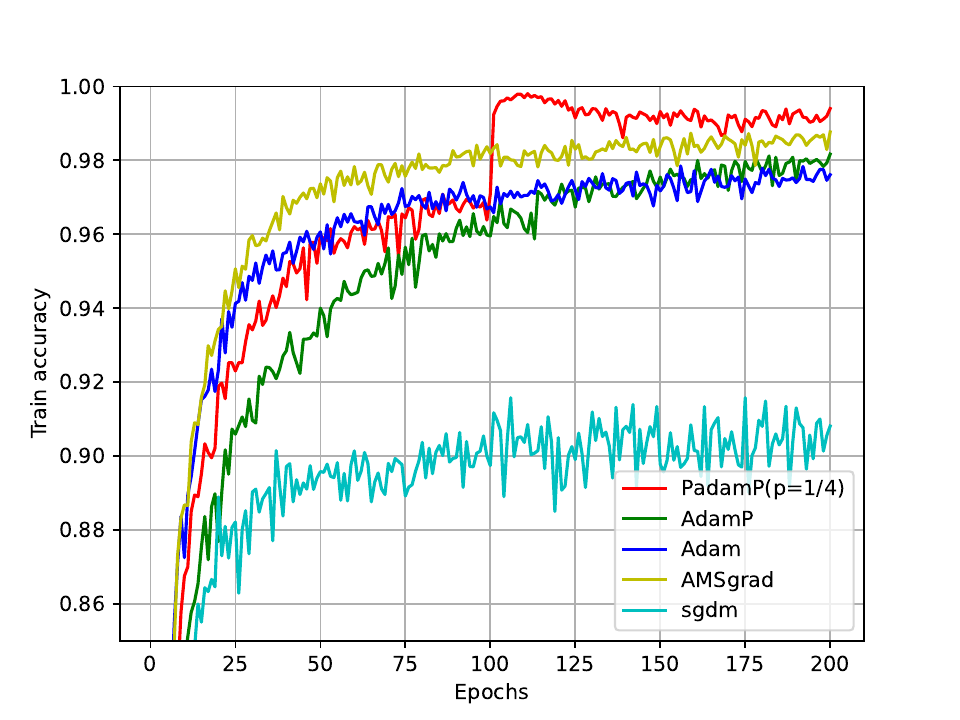}}\\
			\subfloat[Test Accuracy]{\includegraphics[width= 7cm,height=4.5cm]{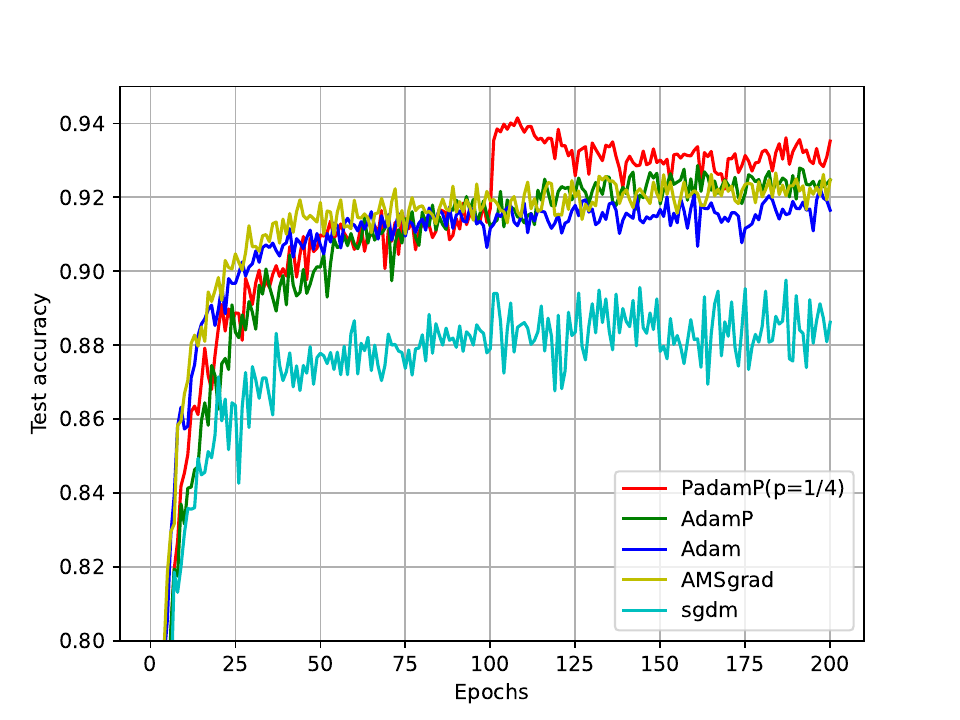}}
			\caption{PadamP V.S. other optimizers. Train on ResNet-18 network, CIFAR-10 data(learning rate not decay).}
			\label{fig10}
		\end{minipage}%
		\hspace{0.5cm}
		\begin{minipage}[t]{0.5\linewidth}
			\centering
			\subfloat[Train Loss]{\includegraphics[width= 7cm,height=4.5cm]{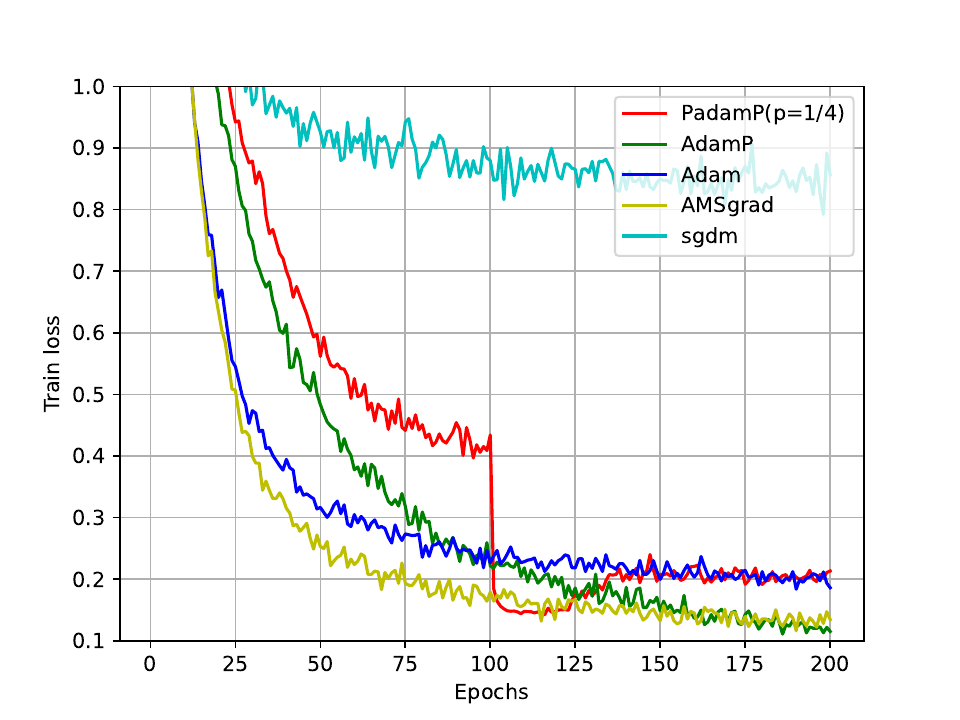}}\\
			\subfloat[Train Accuracy]{\includegraphics[width= 7cm,height=4.5cm]{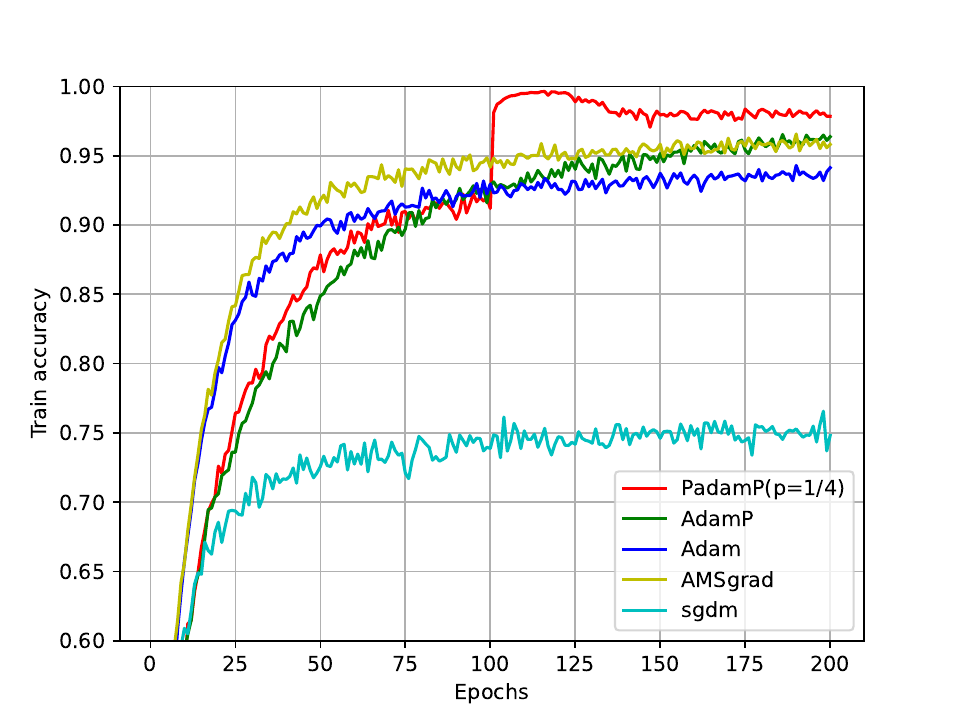}}\\
			\subfloat[Test Accuracy]{\includegraphics[width= 7cm,height=4.5cm]{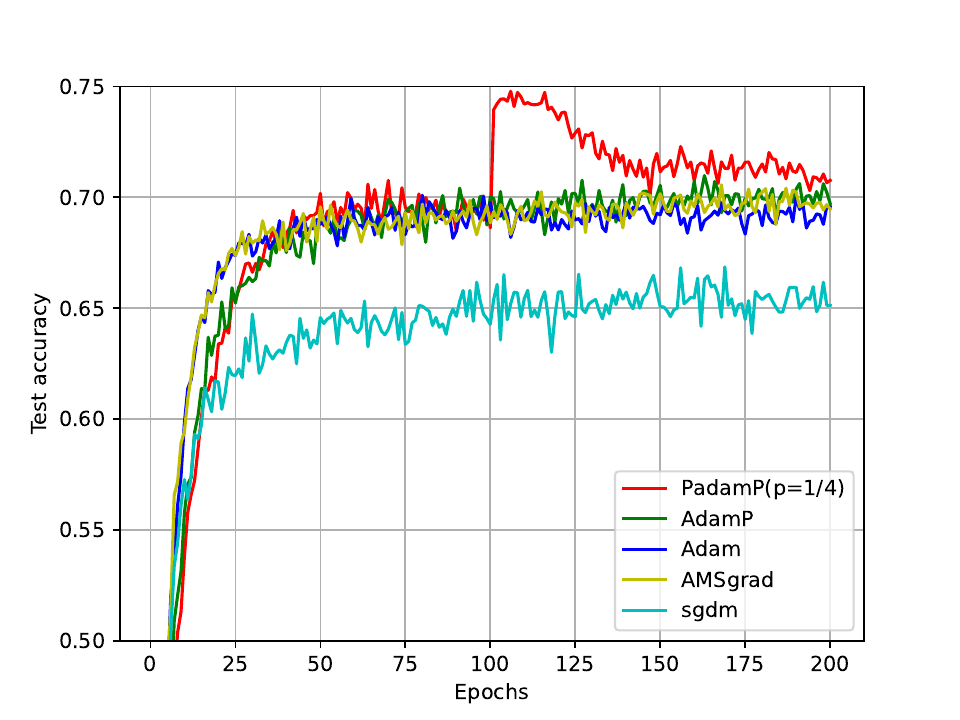}}
			\caption{PadamP V.S. other optimizers. Train on ResNet-18 network, CIFAR-100 data(learning rate not decay).}
			\label{fig11}
		\end{minipage}
	\end{figure}

	Figure \ref{fig8}-\ref{fig11} show the results of the experiments of 
	VGG-16 and ResNet-18 on CIFAR-10 and CIFAR-100, respectively.

	By looking at the figure \ref{fig8}-\ref{fig11}, we can find that PadamP performs well on both VGG-16 and ResNet-18 networks when the $ p $-value decreases, especially in the test accuracy, which is far better than the other optimizers, and also the train loss is not comparable with other methods, which shows the superiority and stability of the PadamP.
	
	\begin{figure}
		\begin{minipage}[t]{0.5\linewidth}
			\centering
			\subfloat[Train Loss]{\includegraphics[width= 7cm,height=4.5cm]{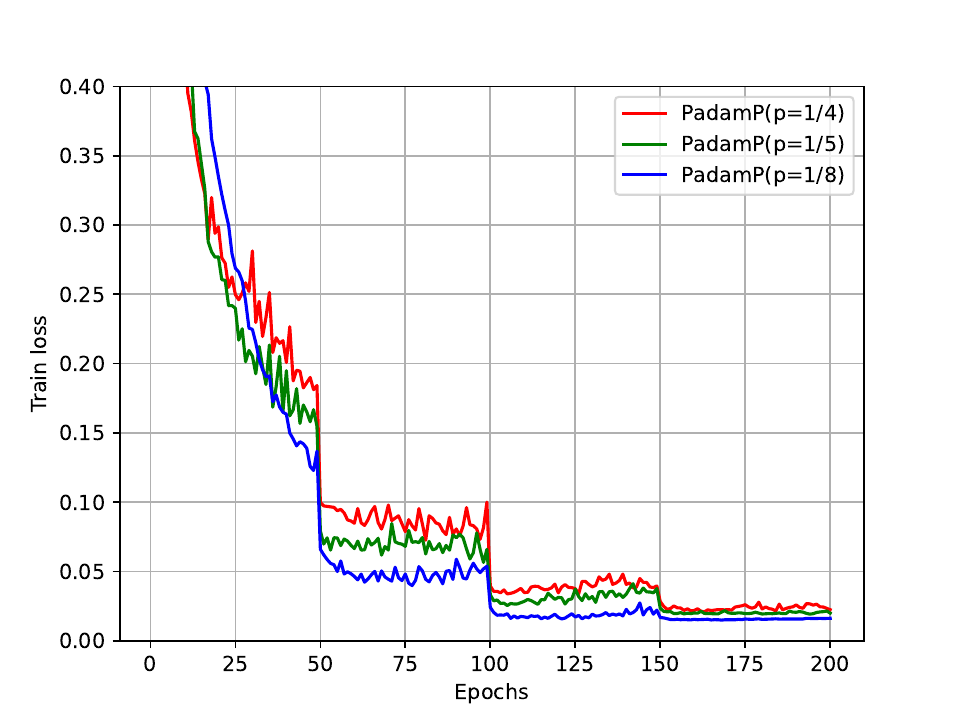}}\\
			\subfloat[Train Accuracy]{\includegraphics[width= 7cm,height=4.5cm]{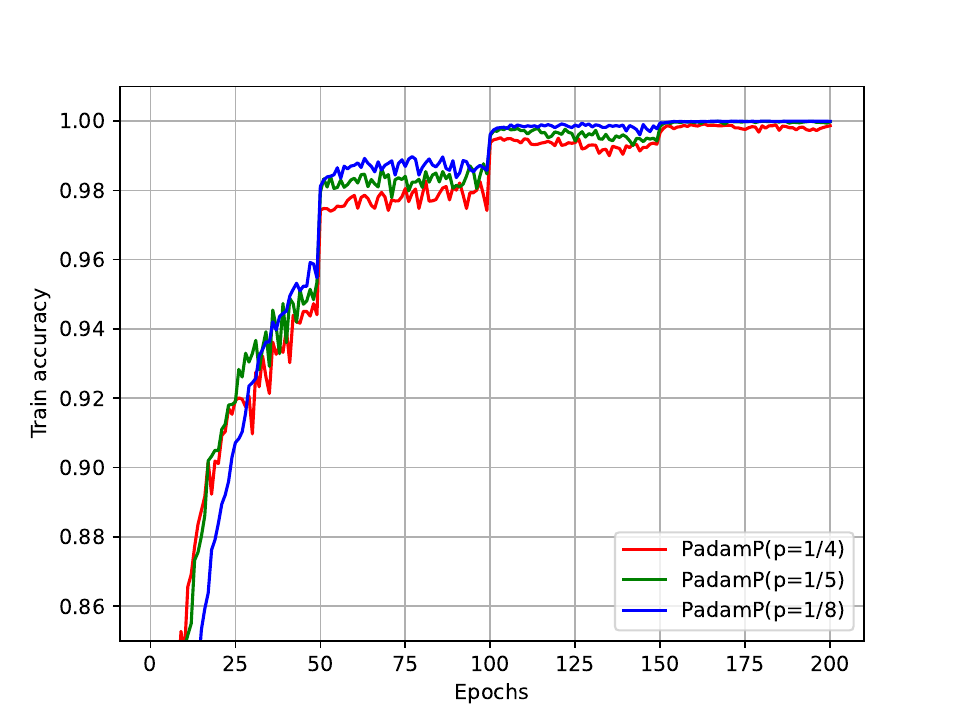}}\\
			\subfloat[Test Accuracy]{\includegraphics[width= 7cm,height=4.5cm]{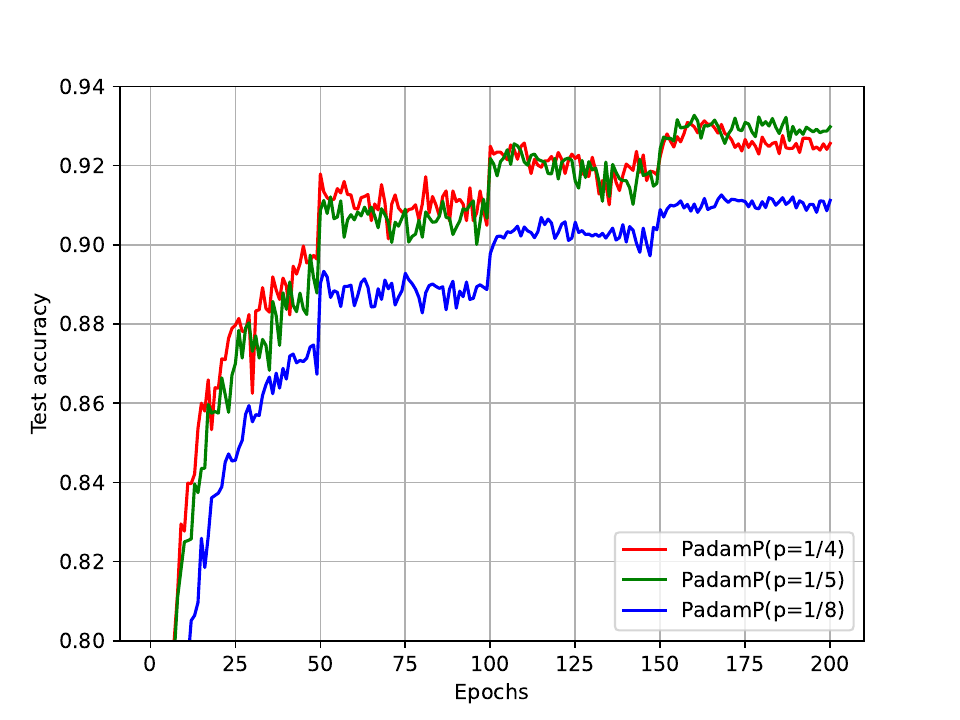}}
			\caption{Different p-values trained on VGG-16 network, CIFAR-10 data(learning rate decay).}
			\label{fig12}
		\end{minipage}%
		\hspace{0.5cm}
		\begin{minipage}[t]{0.5\linewidth}
			\centering
			\subfloat[Train Loss]{\includegraphics[width= 7cm,height=4.5cm]{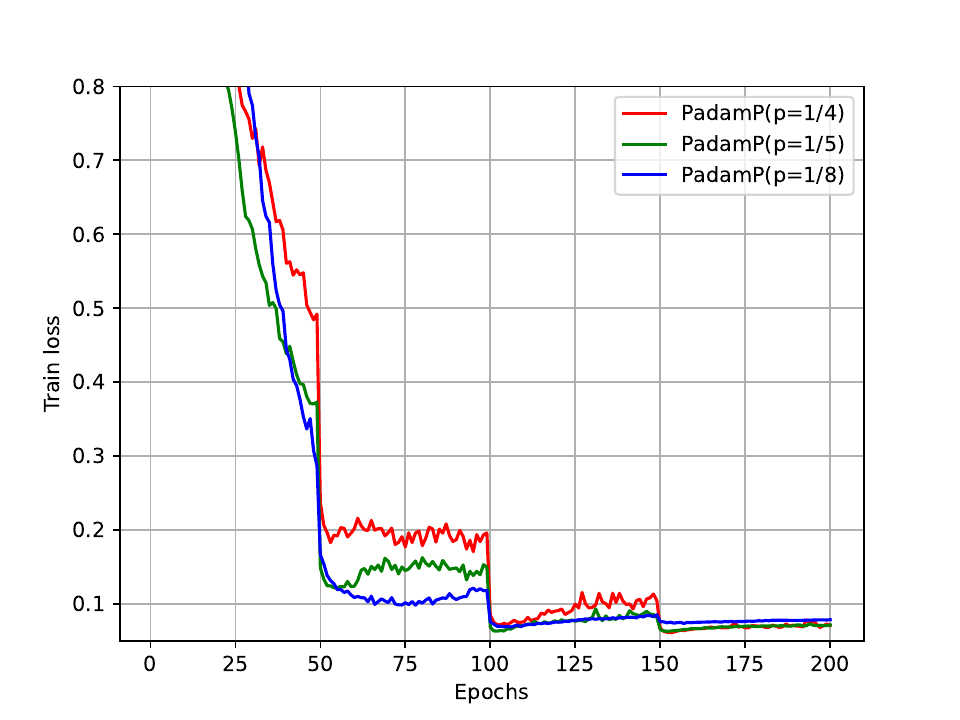}}\\
			\subfloat[Train Accuracy]{\includegraphics[width= 7cm,height=4.5cm]{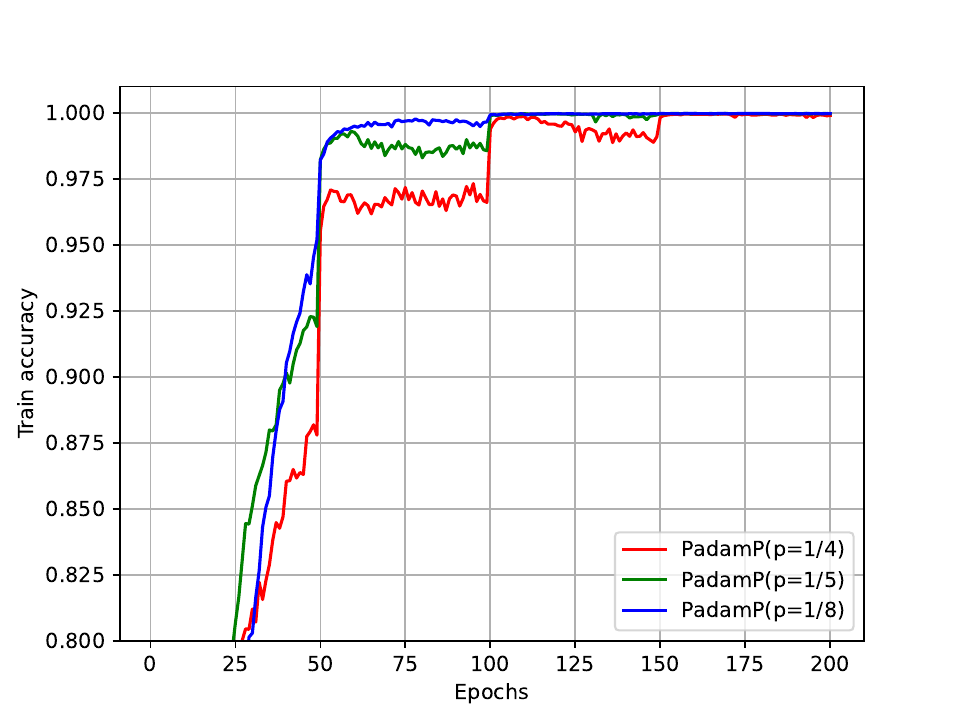}}\\
			\subfloat[Test Accuracy]{\includegraphics[width= 7cm,height=4.5cm]{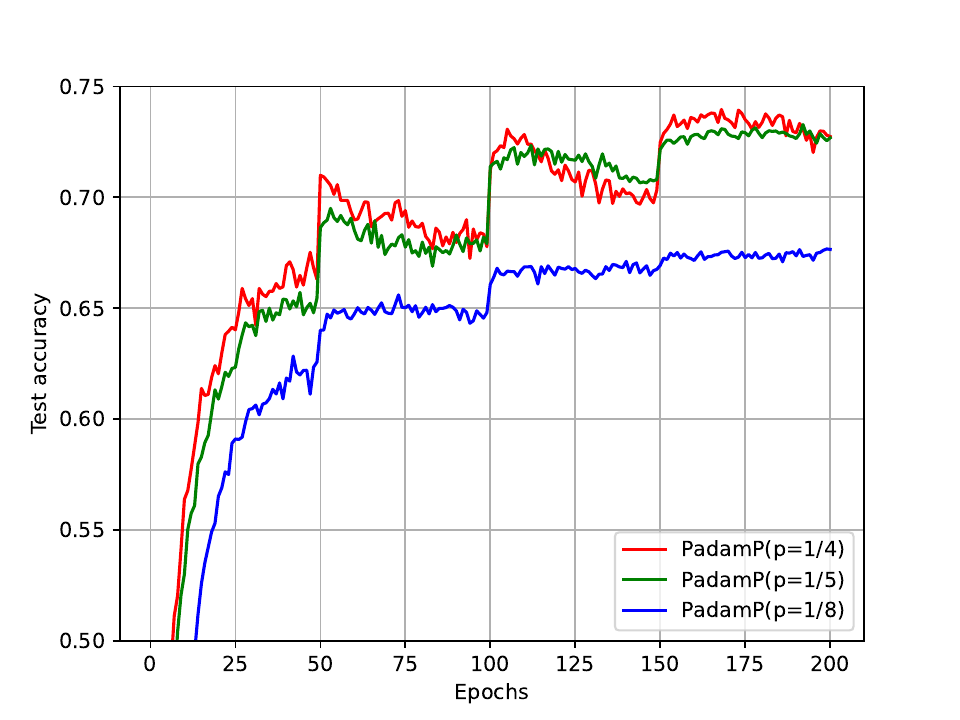}}
			\caption{Different p-values trained on VGG-16 network, CIFAR-100 data (learning rate decay).}
			\label{fig13}
		\end{minipage}
	\end{figure}

	\begin{figure}
		\begin{minipage}[t]{0.5\linewidth}
			\centering
			\subfloat[Train Loss]{\includegraphics[width= 7cm,height=4.5cm]{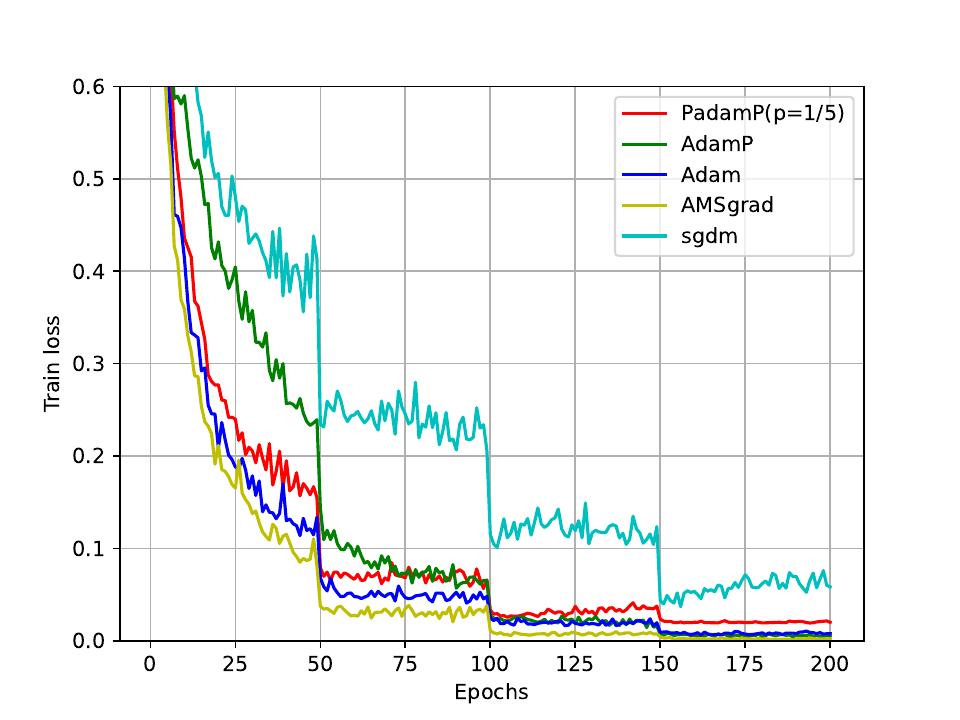}}\\
			\subfloat[Train Accuracy]{\includegraphics[width= 7cm,height=4.5cm]{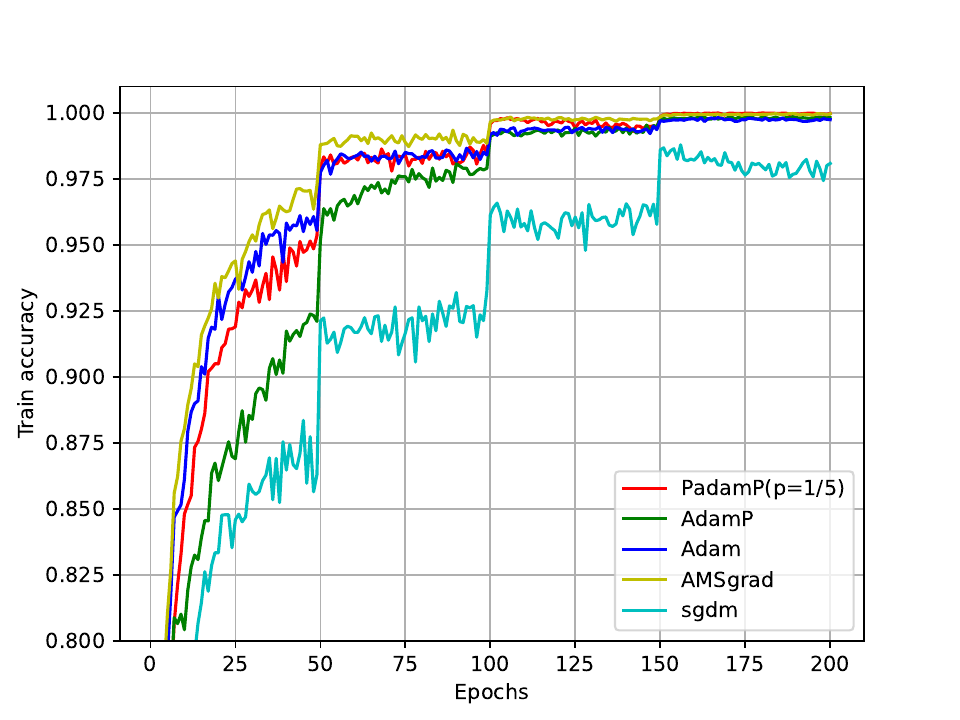}}\\
			\subfloat[Test Accuracy]{\includegraphics[width= 7cm,height=4.5cm]{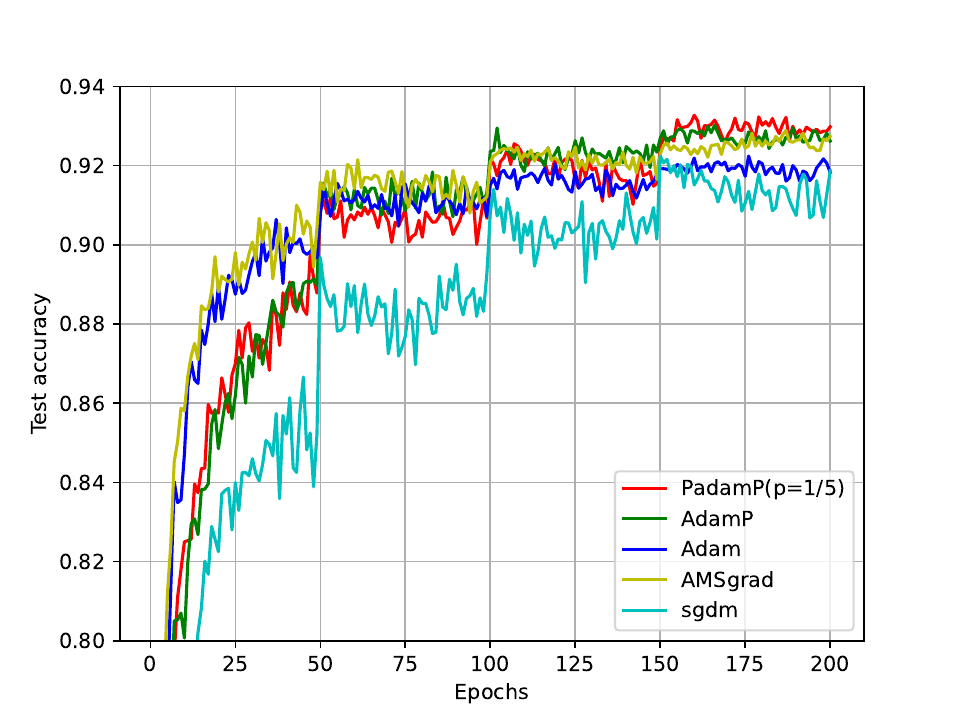}}
			\caption{PadamP V.S. other optimizers. Train on VGG-16 network, CIFAR-10 data(learning rate decay).}
			\label{fig14}
		\end{minipage}%
		\hspace{0.5cm}
		\begin{minipage}[t]{0.5\linewidth}
			\centering
			\subfloat[Train Loss]{\includegraphics[width= 7cm,height=4.5cm]{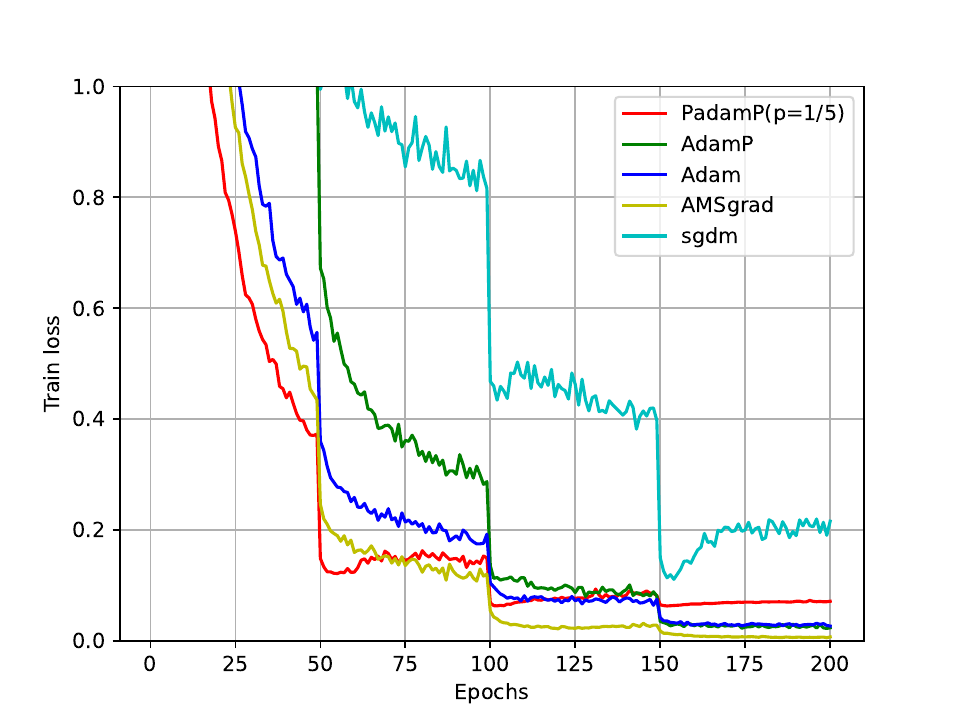}}\\
			\subfloat[Train Accuracy]{\includegraphics[width= 7cm,height=4.5cm]{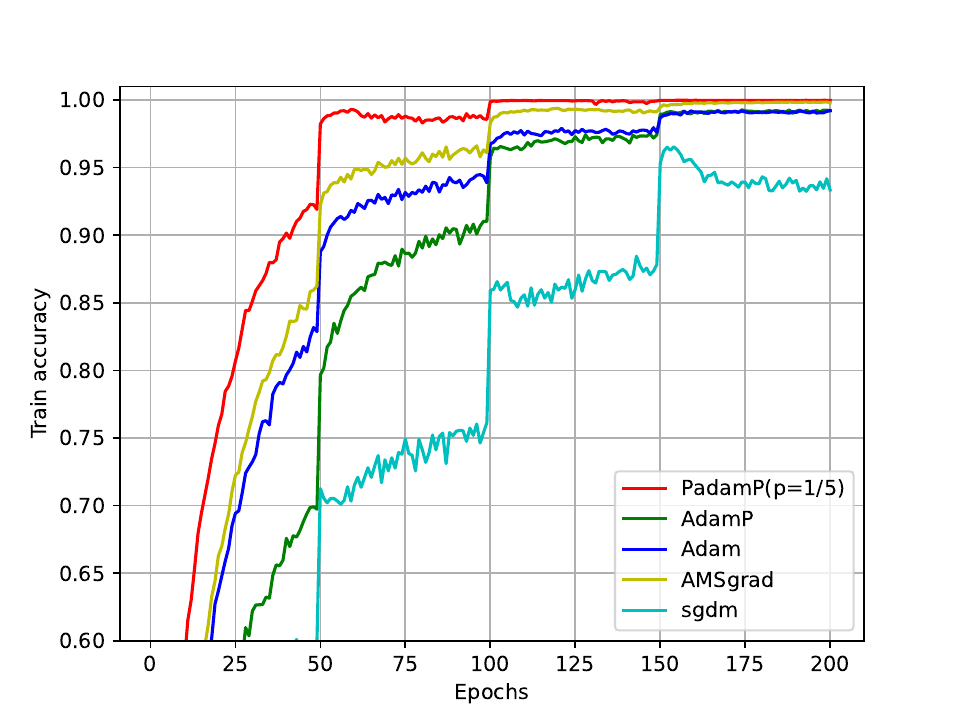}}\\
			\subfloat[Test Accuracy]{\includegraphics[width= 7cm,height=4.5cm]{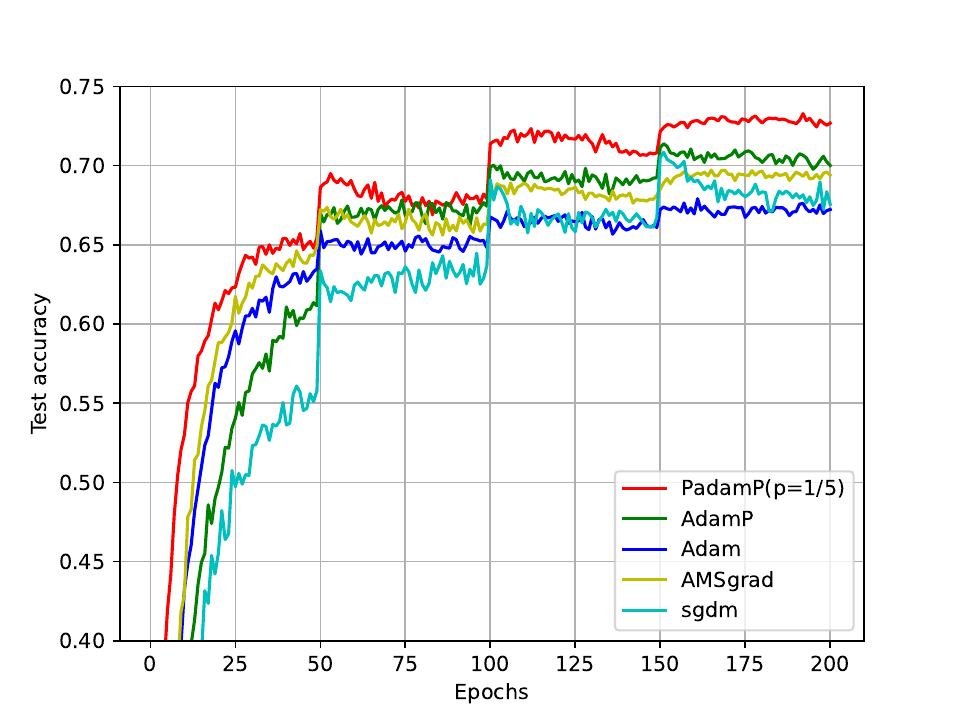}}
			\caption{PadamP V.S. other optimizers. Train on VGG-16 network, CIFAR-100 data(learning rate decay).}
			\label{fig15}
		\end{minipage}
	\end{figure}
	
	Figure \ref{fig12}-\ref{fig15} show the results of the experiments of VGGNet-16 on CIFAR-10 and CIFAR-100.
	Here we use the initial learning rate decay strategy with 0.1 decay per 50 epochs, and the experimental results corroborate what we said above:
	When $ p=1/4 $ and $ p=1/5 $, the test accuracy and train loss of PadamP in the first half of the iteration (before $ 100 $ epochs) are very good; When $ p=1/8 $ in the second half of the iteration (after $ 100 $ epochs), the train loss of PadamP performs well.
	
	\section{Conclusion}
	\label{}
	In this paper, for the purpose of accelerating deep neural networks training and 
	helping the network find more optimal parameters, the projection-gradient is 
	incorporated into the generic Adam, which is named PadamP. 
	The PadamP is modified from vanilla AdamP, gradient accumulation adopts $ p $-th power adaptive strategy, while updating step size adopts projection-gradient. 
	We theoretically prove the convergence
	of our algorithm and manage to not only provide convergence for the non-convex but 
	also deal with the coupling of the first-order moment estimation coefficients $\beta_{1}$ and the second-order moment estimation coefficients $\beta_{2}$.
	Numerical experiments of training VGG-16/ResNet-18 on CIFAR-10/100 for image classification 
	demonstrate effectiveness and better performance. Our algorithm performs more stable and 
	arrives at 100\% train accuracy faster than other optimizers. Higher test accuracy provides strong evidence 
	that our algorithm obtains more optimal parameters of deep neural networks.
	More future work includes conducting various experiments for deep learning task, 
	applying variance reduction technique for the PadamP, exploring 
	line-search for finding a suitable stepsize, etc. 
	
	\bibliographystyle{unsrt}
	\bibliography{references}
	
	\appendices
	\section{Proof of Some Lemmas} 
	\label{prth31}
	Before beginning the proof of Theorem \ref{th3.1}, all the necessary lemmas as well as the proof of them are firstly provided at once.
	
	\begin{lemma}\label{lem2}
		Given that $ m_{t} $ in PadamP, the following equation holds:
		\begin{equation}\label{eqA42}
			-m_{t}=-g_{t}+\frac{\beta_{1t}}{1-\beta_{1t}}\left(m_{t}-m_{t-1}\right).
		\end{equation}
	\end{lemma}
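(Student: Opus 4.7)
The plan is to derive the identity directly from the exponential-moving-average update rule that defines $m_t$ inside Algorithm~\ref{alg:PadamP}, namely $m_t = \beta_{1t} m_{t-1} + (1-\beta_{1t}) g_t$, where I am writing $g_t = \nabla_\theta f_t(\theta_t)$ for brevity. The statement (\ref{eqA42}) is purely an algebraic rearrangement of this recursion, so no probabilistic or analytic tools are needed.

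First I would isolate $g_t$. Subtracting $\beta_{1t} m_{t-1}$ from both sides of the update rule and dividing by $1-\beta_{1t}$ (which is nonzero since $\beta_{1t}\in[0,1)$) gives $g_t = (m_t - \beta_{1t} m_{t-1})/(1-\beta_{1t})$. Next I would add and subtract $\beta_{1t} m_t$ in the numerator, splitting it as
\begin{equation*}
g_t \;=\; \frac{(1-\beta_{1t})\,m_t \;+\; \beta_{1t}(m_t - m_{t-1})}{1-\beta_{1t}} \;=\; m_t \;+\; \frac{\beta_{1t}}{1-\beta_{1t}}\bigl(m_t - m_{t-1}\bigr).
\end{equation*}
Moving $m_t$ across and negating both sides then yields exactly $-m_t = -g_t + \frac{\beta_{1t}}{1-\beta_{1t}}(m_t - m_{t-1})$, which is (\ref{eqA42}).

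There is no genuine obstacle here; the only thing to note is that the step hinges on $1-\beta_{1t}>0$, which is guaranteed by the algorithm's hypothesis $\beta_{1t}\in[0,1)$, and that the identity is a one-step manipulation rather than an induction. The utility of writing $-m_t$ in this split form is that it separates the descent direction into a ``gradient part'' $-g_t$ and a ``momentum correction'' proportional to the one-step change $m_t-m_{t-1}$, which is the form needed later when bounding $\langle \nabla f(\theta_t), -m_t\rangle$ in the convergence proof of Theorem~\ref{th3.1}.
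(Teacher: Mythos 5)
Your proof is correct and is essentially the same argument as the paper's: both are one-step algebraic rearrangements of the recursion $m_t=\beta_{1t}m_{t-1}+(1-\beta_{1t})g_t$, differing only in direction (you solve the recursion for $g_t$ and split the numerator, while the paper puts the right-hand side of \eqref{eqA42} over a common denominator and substitutes the recursion to recover $-m_t$). Your remark that the step requires $1-\beta_{1t}>0$ is a correct and worthwhile observation.
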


	\begin{proof}
		By referring to the definition, the equation $ -g_{t}+\frac{\beta_{1t}}{1-\beta_{1t}}\left(m_{t}-m_{t-1}\right) $ can be rewritten as follows:
		\begin{equation}\label{eqA44}
			-g_{t}+\frac{\beta_{1t}}{1-\beta_{1t}}\left(m_{t}-m_{t-1}\right)=\frac{-\left(1-\beta_{1t}\right) g_{t}+\beta_{1t} m_{t}-\beta_{1t} m_{t-1}}{1-\beta_{1t}},
		\end{equation}
		When the equation  $ m_{t}=\beta_{1t} m_{t-1}+\left(1-\beta_{1t}\right) g_{t} $ is substituted into the above equation, we obtain:
		\begin{equation}\label{eqA45}
			-g_{t}+\frac{\beta_{1t}}{1-\beta_{1t}}\left(m_{t}-m_{t-1}\right)=\frac{\beta_{1t} m_{t}-m_{t}}{1-\beta_{1t}}=-m_{t}.
		\end{equation}
		
		The proof is over.
	\end{proof}

	\begin{lemma}\label{lem3}
		For $ v_{t} $ defined as in PadamP, under the condition that the gradient satisfies Assumption \ref{ass31}, the following inequality is valid:
		\begin{equation}\label{eqA42} 
			0 \le {v_t} \le C_1^2.
		\end{equation}
	\end{lemma}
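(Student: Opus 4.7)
The plan is to prove the two-sided bound on $v_t$ by straightforward induction on $t$, exploiting the fact that the update $v_t \leftarrow \beta_2 v_{t-1} + (1-\beta_2)(\nabla_\theta f_t(\theta_t))^2$ is a convex combination (coordinate-wise) of $v_{t-1}$ and the squared stochastic gradient $g_t^2$.

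For the base case, recall that PadamP (like Adam) initializes $v_0 = 0$, so $0 \le v_0 \le C_1^2$ holds trivially. For the inductive step, I would assume $0 \le v_{t-1} \le C_1^2$ coordinate-wise. The lower bound $v_t \ge 0$ is immediate: both $\beta_2 v_{t-1}$ and $(1-\beta_2) g_t^2$ are non-negative since $\beta_2 \in (0,1)$, $v_{t-1} \ge 0$ by the inductive hypothesis, and any square is non-negative. For the upper bound, I would invoke Assumption \ref{ass31}, which gives $\|g_t\| \le C_1$, and hence coordinate-wise $(g_t)_i^2 \le \|g_t\|^2 \le C_1^2$. Combining this with the inductive hypothesis yields
\begin{equation*}
v_t = \beta_2 v_{t-1} + (1-\beta_2) g_t^2 \le \beta_2 C_1^2 + (1-\beta_2) C_1^2 = C_1^2,
\end{equation*}
which closes the induction.

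There is essentially no obstacle here; the lemma is a routine boundedness statement that follows from the fact that an exponential moving average of a bounded non-negative sequence stays within the same bounds. The only minor subtlety is the jump from the norm bound $\|g_t\| \le C_1$ to the coordinate-wise bound on $g_t^2$, which is handled by the trivial inequality $|(g_t)_i| \le \|g_t\|_2$. This lemma will later be used to control the denominator $(\hat v_t)^p$ appearing in the PadamP update, so making the two-sided bound explicit is the only goal.
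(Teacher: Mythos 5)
Your proof is correct and rests on the same observation as the paper's: the exponential moving average is a convex combination of non-negative quantities bounded by $C_1^2$, so it stays in $[0, C_1^2]$. The paper reaches the same conclusion by unrolling the recursion into the geometric sum $v_t = (1-\beta_2)\sum_{i=1}^{t}\beta_2^{t-i}\|g_i\|^2 \le (1-\beta_2^t)C_1^2$ rather than by induction, which is just the closed-form version of your inductive step; your explicit handling of the coordinate-wise bound $(g_t)_i^2 \le \|g_t\|^2$ is if anything slightly more careful than the paper's.
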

	
	\begin{proof}
		According to PadamP, $ v_{t} $ can be expressed as:
		\begin{equation}\label{eqA46}
			v_{t}=\beta_{2} v_{t-1}+(1-\beta_{2})\left\|g_{t}\right\|^{2}=(1-\beta_{2}) \sum_{i=1}^{t} \beta_{2}^{t-i}\left\|g_{t}\right\|^{2},
		\end{equation}
		From the definition of $ v_{t} $, it is easy to see that $ v_{t} \ge 0 $ holds. additionally, by Assumption \ref{ass31} and considering that $\beta_{2} \in (0,1)$ , we can derive the following inequality:
		\begin{equation}\label{eqA47}
			v_{t}=(1-\beta_{2}) \sum_{i=1}^{t} \beta_{2}^{t-i}\left\|g_{t}\right\|^{2} \leq(1-\beta_{2}) C_{1}^{2} \sum_{i=1}^{t} \beta_{2}^{t-i}=\left(1-\beta_{2}^{t}\right) C_{1}^{2} \leq C_{1}^{2}.
		\end{equation}
		
		The proof is over.
		
	\end{proof}

	\begin{lemma}\label{lem4}
		Consider a diagonal matrix $ B_{t} $ that satisfies the following form:
		\begin{equation}\label{eq18}
			B_{t}=\left[\begin{array}{ccc}
				\frac{1}{(v_{t, 1}+\epsilon)^{p}} & \cdots & 0 \\
				\vdots & \ddots & \vdots \\
				0 & \cdots & \frac{1}{(v_{t, d}+\epsilon)^{p}}
			\end{array}\right], 	
		\end{equation}
		given that the initial value of $ v_{t} $ is set to $ 0 $ and Assumption \ref{ass31} holds, the following inequality can be established:
		\begin{equation}\label{eq19}
			\frac{I}{({C_{1}^{2}+\epsilon})^{p}} \leq B_{t} \leq \frac{I}{{\epsilon}^{p}},
		\end{equation}
		where $ I $ is the unit matrix, $ \epsilon $ is a constant and $ \epsilon \in(0,1) $.
	\end{lemma}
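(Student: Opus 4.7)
The plan is to observe that $B_t$ is diagonal, so the asserted matrix inequality $\frac{I}{(C_1^2+\epsilon)^p} \leq B_t \leq \frac{I}{\epsilon^p}$ reduces to a coordinate-wise bound on its diagonal entries $\frac{1}{(v_{t,i}+\epsilon)^p}$. It therefore suffices to prove, for each coordinate $i \in \{1,\dots,d\}$ and each $t$, the scalar bound
\begin{equation*}
0 \leq v_{t,i} \leq C_1^2,
\end{equation*}
and then apply monotonicity of $x \mapsto 1/(x+\epsilon)^p$ (which is decreasing on $[0,\infty)$ for $p \in (0,1/2]$ and $\epsilon > 0$).

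First I would unroll the PadamP recursion for the second moment componentwise. Since $v_0 = 0$ and the update is $v_t = \beta_2 v_{t-1} + (1-\beta_2)(\nabla_\theta f_t(\theta_t))^2$ (element-wise square), the standard telescoping gives
\begin{equation*}
v_{t,i} = (1-\beta_2)\sum_{j=1}^{t}\beta_2^{t-j}\, g_{j,i}^{\,2},
\end{equation*}
which is manifestly nonnegative. Using $g_{j,i}^{\,2} \leq \|g_j\|^2 \leq C_1^2$ from Assumption \ref{ass31}, and the geometric sum identity $(1-\beta_2)\sum_{j=1}^{t}\beta_2^{t-j} = 1-\beta_2^{t} \leq 1$, the upper bound $v_{t,i} \leq C_1^2$ follows immediately. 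This step is essentially the coordinate-wise analogue of Lemma \ref{lem3}.

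Next, since $\epsilon \in (0,1)$ and $p \in (0,1/2]$, the map $x \mapsto (x+\epsilon)^p$ is strictly increasing on $[0,C_1^2]$, so its reciprocal is decreasing. Plugging in the bounds $0 \leq v_{t,i} \leq C_1^2$ yields
\begin{equation*}
\frac{1}{(C_1^2+\epsilon)^p} \leq \frac{1}{(v_{t,i}+\epsilon)^p} \leq \frac{1}{\epsilon^p}.
\end{equation*}
Finally, because $B_t$ is diagonal, each of its eigenvalues is exactly one of these entries; equivalently, for any $x \in \mathbb{R}^d$ one has $x^\top B_t x = \sum_i x_i^2/(v_{t,i}+\epsilon)^p$, which is sandwiched between $\epsilon^{-p}\|x\|^2$ and $(C_1^2+\epsilon)^{-p}\|x\|^2$. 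This gives the matrix inequality in the Loewner order and concludes the proof.

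There is no genuine obstacle here: the lemma is essentially a corollary of Lemma \ref{lem3}, and the only mildly nontrivial step is confirming that the scalar bound of Lemma \ref{lem3} transfers to each coordinate, which it does because the PadamP recursion decouples across coordinates and $g_{j,i}^{\,2} \leq \|g_j\|^2$. The rest is monotonicity plus the fact that spectral bounds on a diagonal matrix are just bounds on its diagonal.
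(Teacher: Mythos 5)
Your proposal is correct and follows essentially the same route as the paper's own proof: unroll the $v_t$ recursion coordinate-wise to get $0 \leq v_{t,i} \leq C_1^2$ from Assumption \ref{ass31}, then pass to the diagonal entries of $B_t$ by monotonicity. You spell out a few steps the paper leaves implicit (the geometric-sum bound, the reduction of the Loewner order to diagonal entries), but the substance is identical.
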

	
	\begin{proof}
		The diagonal matrix $ B_{t} $ is defined as
		\begin{equation}\label{eq20}
			B_{t}=\left[\begin{array}{ccc}
				\frac{1}{({v_{t, 1}+\epsilon})^{p}} & \cdots & 0 \\
				\vdots & \ddots & \vdots \\
				0 & \cdots & \frac{1}{({v_{t, d}+\epsilon})^{p}}
			\end{array}\right],
		\end{equation}
		according to PadamP, the elements of $ v_{t} $ can be expressed as
		\begin{equation}\label{eq21}
			v_{t, i}=(1-\beta_{2}) \sum_{j=1}^{t} \beta_{2}^{t-i} g_{j-1,i}^{2},
		\end{equation}
		it can be readily observed that each element is  ${v_{t,i}}$ non-negative and ${v_{t,i}} \ge 0$. By Assumption \ref{ass31} and considering that $\beta_{2} \in (0,1)$, it follows that
		\begin{equation}\label{eq22}
			v_{t, i} \leq C_{1}^{2},
		\end{equation}
		based on the above results, for the diagonal matrix $ B_{t} $ has the conclusion
		\begin{equation}\label{eq23}
			\frac{1}{({C_{1}^{2}+\epsilon})^{p}} I=\left[\begin{array}{ccc}
				\frac{1}{({C_{1}^{2}+\epsilon})^{p}} & \cdots & 0 \\
				\vdots & \ddots & \vdots \\
				0 & \cdots & \frac{1}{({C_{1}^{2}+\epsilon})^{p}}
			\end{array}\right] \leq B_{t}=\left[\begin{array}{ccc}
				\frac{1}{({\sum_{j=1}^{t} g_{j-1,1}^{2}+\epsilon})^{p}} & \cdots & 0 \\
				\vdots & \ddots & \vdots \\
				0 & \cdots & \frac{1}{({\sum_{j=1}^{t} g_{j-1,d}^{2}+\epsilon})^{p}}
			\end{array}\right]
		\end{equation}
		and
		\begin{equation}\label{eq24}
			B_{t}=\left[\begin{array}{ccc}
				\frac{1}{({\sum_{j=1}^{t} g_{j-1,1}^{2}+\epsilon})^{p}} & \cdots & 0 \\
				\vdots & \ddots & \vdots \\
				0 & \cdots & \frac{1}{({\sum_{j=1}^{t} g_{j-1,d}^{2}+\epsilon})^{p}}
			\end{array}\right] \leq\left[\begin{array}{ccc}
				\frac{1}{{\epsilon}^{p}} & \cdots & 0 \\
				\vdots & \ddots & \vdots \\
				0 & \cdots & \frac{1}{{\epsilon}^{p}}
			\end{array}\right]=\frac{1}{{\epsilon}^{p}} I.
		\end{equation}
		
		The proof is over.

	\end{proof}
	\begin{lemma}\label{lem5}
		Suppose that $ g_{t} $, $ v_{t} $ satisfy Assumption \ref{ass31} and Assumption  \ref{ass35} respectively, and both Lemma \ref{lem3} and Lemma \ref{lem4} hold. Then, the following inequalities can be established:
		\begin{equation}\label{eq19}
			\left\langle\hat{\theta}_{t} , \frac{m_{t}}{\left(v_{t}+\epsilon\right)^{p}}\right\rangle  \leq \frac{C_{1}}{\epsilon^{p}},
		\end{equation}
		\begin{equation}\label{eq19}
			\mathbb{E}\left[\left\|\frac{g_{t}}{({v_{t}+\epsilon})^{p}}\right\|^{2}\right] \leq \frac{C_{1}^{2}}{\epsilon^{2p}},
		\end{equation}
		\begin{equation}\label{eq19}
			\mathbb{E}\left[\left\langle\nabla f\left(\theta_{t}\right),  \frac{m_{t}-m_{t-1}}{({v_{t}+\epsilon})^{p}}\right\rangle\right] \leq \frac{2C_{1}^{2}}{\epsilon^{p}}.
		\end{equation}
	\end{lemma}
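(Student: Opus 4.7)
My plan is to establish all three bounds by combining the deterministic estimates from Lemma \ref{lem3} and Lemma \ref{lem4} (which give $(v_{t,i}+\epsilon)^{-p} \leq \epsilon^{-p}$ coordinate-wise), Assumption \ref{ass31} (uniform bound on $\|g_t\|$ and $\|\nabla f(\theta_t)\|$), and standard Cauchy--Schwarz in $\mathbb{R}^d$. Each of the three inequalities reduces to bounding the $\ell_2$-norm of a vector divided coordinate-wise by $(v_t+\epsilon)^p$, after which the inner product is controlled by the norm of the remaining factor.

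For the first inequality, I read $\hat{\theta}_t = \theta_t/\|\theta_t\|_2$ (the unit-normalized parameter from the scale-invariance discussion), so that $\|\hat{\theta}_t\|_2 = 1$. Cauchy--Schwarz gives $\langle \hat{\theta}_t, m_t/(v_t+\epsilon)^p \rangle \leq \|m_t/(v_t+\epsilon)^p\|_2$. Since $m_t = \beta_{1t} m_{t-1} + (1-\beta_{1t}) g_t$ is a convex combination, a one-line induction starting from $m_0 = 0$ combined with Assumption \ref{ass31} yields $\|m_t\|_2 \leq C_1$. Combining with the coordinate-wise inequality $(v_{t,i}+\epsilon)^p \geq \epsilon^p$ from Lemma \ref{lem4} gives $\|m_t/(v_t+\epsilon)^p\|_2 \leq C_1/\epsilon^p$, proving (1). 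The second inequality is almost identical: pointwise one has $\|g_t/(v_t+\epsilon)^p\|_2^2 \leq \|g_t\|_2^2 / \epsilon^{2p} \leq C_1^2/\epsilon^{2p}$, so taking expectations preserves the bound.

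For the third inequality, the key observation is the identity $m_t - m_{t-1} = (1-\beta_{1t})(g_t - m_{t-1})$, obtained directly from the recursion. The triangle inequality together with $\|g_t\|_2, \|m_{t-1}\|_2 \leq C_1$ gives $\|m_t - m_{t-1}\|_2 \leq 2C_1(1-\beta_{1t}) \leq 2C_1$. Applying Cauchy--Schwarz with $\|\nabla f(\theta_t)\|_2 \leq C_1$ and the same denominator bound produces the pointwise estimate $\langle \nabla f(\theta_t), (m_t - m_{t-1})/(v_t+\epsilon)^p \rangle \leq 2C_1^2/\epsilon^p$, which persists under expectation.

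I do not expect a substantive obstacle, since all three statements reduce to uniform deterministic bounds followed by monotonicity of expectation; in particular, the randomness in the denominator $(v_t+\epsilon)^p$ can be controlled independently of the numerator via Lemma \ref{lem4}. The only minor care required is to interpret the division $m_t/(v_t+\epsilon)^p$ coordinate-wise (consistent with Lemma \ref{lem4}) rather than as a scalar operation, so that $(v_{t,i}+\epsilon)^p \geq \epsilon^p$ can be applied in each component before the $\ell_2$-norm is taken.
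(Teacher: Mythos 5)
Your proof is correct and follows essentially the same route as the paper's: Cauchy--Schwarz, the uniform bounds $\|g_t\|\le C_1$ and $\|\nabla f(\theta_t)\|\le C_1$ from Assumption \ref{ass31}, and the coordinate-wise denominator bound $(v_{t,i}+\epsilon)^{-p}\le \epsilon^{-p}$ from Lemmas \ref{lem3} and \ref{lem4}, followed by monotonicity of expectation. Your explicit induction establishing $\|m_t\|_2\le C_1$ from the convex-combination form of the recursion is a small but worthwhile addition, since the paper invokes $\|m_t\|\le C_1$ without justifying it; otherwise the two arguments coincide (your identity $m_t-m_{t-1}=(1-\beta_{1t})(g_t-m_{t-1})$ yields the same $2C_1$ bound the paper gets from the plain triangle inequality).
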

	\begin{proof}
		Given that $\| \hat{\theta}_{t} \| = 1 $, and leveraging Assumption \ref{ass31}, Assumption  \ref{ass35}, Lemma \ref{lem3} and Lemma \ref{lem4},  the following chain of inequalities is derived.
		First, by the property of the dot-product and the norm, we know that $ 		\left\langle\hat{\theta}_{t} , \frac{m_{t}}{\left(v_{t}+\epsilon\right)^{p}}\right\rangle $ is bounded as follows:
		\begin {equation}\label {eq19}
		\left\langle\hat{\theta}_{t} , \frac{m_{t}}{\left(v_{t}+\epsilon\right)^{p}}\right\rangle\leq \left\|\hat{\theta}_{t}\right\|\left\|\frac{m_{t}}{\left(v_{t}+\epsilon\right)^{p}}\right\|.
		\end {equation}
		Since  $ \left\|\hat{\theta}_{t}\right\|=1  $, we further have  $ \left\|\hat{\theta}_{t}\right\|\left\|\frac{m_{t}}{\left(v_{t}+\epsilon\right)^{p}}\right\|=\left\|\frac{m_{t}}{\left(v_{t}+\epsilon\right)^{p}}\right\| $. According to the norm properties and the given assumptions,
		\begin{equation}
			\left\|\frac{m_{t}}{\left(v_{t}+\epsilon\right)^{p}}\right\| \leq\left\|m_{t}\right\|\left\|\frac{1}{\left(v_{t}+\epsilon\right)^{p}}\right\|  .
		\end{equation}
		From Lemma \ref{lem3} and Lemma \ref{lem4},  $\left\| \frac{1}{\left(v_{t}+\epsilon\right)^{p}} \right\| \leq \frac{1}{\epsilon^{p}}$, thus
		\begin{equation}
			\left\langle\hat{\theta}_{t} , \frac{m_{t}}{\left(v_{t}+\epsilon\right)^{p}}\right\rangle\leq \left\|\hat{\theta}_{t}\right\|\left\|\frac{m_{t}}{\left(v_{t}+\epsilon\right)^{p}}\right\| 
			\leq \left\|m_{t}\right\|\left\|\frac{1}{\left(v_{t}+\epsilon\right)^{p}}\right\| 
			\leq \frac{C_{1}}{\epsilon^{p}}.
		\end{equation}
		
		Next, consider the norm $ \left\|\frac {g_{t}}{\left (v_{t}+\epsilon\right)^{p}}\right\| $. By similar reasoning as above, we have
		\begin {equation}\label {eq19}
		\left\|\frac {g_{t}}{\left (v_{t}+\epsilon\right)^{p}}\right\|
		\leq 
		\left\| g_{t} \right\| \left\| \frac {1}{\left (v_{t}+\epsilon\right)^{p}} \right\|.
		\end {equation}
		
		From Assumption \ref{ass31}, $\left\| g_{t} \right\| \leq C_{1}$ and $\left\| \frac{1}{\left(v_{t}+\epsilon\right)^{p}} \right\| \leq \frac{1}{\epsilon^{p}}$$\left\| \frac{1}{\left(v_{t}+\epsilon\right)^{p}} \right\| \leq \frac{1}{\epsilon^{p}}$. So  $\left\|\frac{g_{t}}{\left(v_{t}+\epsilon\right)^{p}}\right\| \leq \frac{C_{1}}{\epsilon^{p}}$. Taking the expectation of the square of the norm, we get
		\begin {equation}\label {eq19}
		\mathbb {E}\left [\left\|\frac {g_{t}}{\left (v_{t}+\epsilon\right)^{p}}\right\|^{2}\right] \leq \frac {C_{1}^{2}}{\epsilon^{2p}}.
		\end {equation}
		
		Finally, for the expectation $\mathbb{E}\left[\left\langle\nabla f\left(\theta_{t}\right),  \frac{m_{t}-m_{t-1}}{\left(v_{t}+\epsilon\right)^{p}}\right\rangle\right]$, by the Cauchy-Schwarz inequality, we have
		\begin {equation}\label {eq19}
		\mathbb {E}\left [\left\langle\nabla f\left (\theta_{t}\right), \frac {m_{t}-m_{t-1}}{\left (v_{t}+\epsilon\right)^{p}}\right\rangle\right] \leq 
		\left\|\nabla f\left (\theta_{t}\right)\right\| \left\|\frac {m_{t}-m_{t-1}}{\left (v_{t}+\epsilon\right)^{p}}\right\|.
		\end {equation}
		
		Using the triangle inequality for the norm, $\left\|\frac{m_{t}-m_{t-1}}{\left(v_{t}+\epsilon\right)^{p}}\right\| \leq (\left\|m_{t}\right\| + \left\|m_{t-1}\right\|)\left\|\frac{1}{\left(v_{t}+\epsilon\right)^{p}}\right\|$, from Assumption \ref{ass31}, $\|\nabla f\left(\theta_{t}\right)\| \leq C_{1}$, 
		$\left\|m_{t}\right\| \leq C_{1}$,$\left\|m_{t-1}\right\| \leq C_{1}$, and from Lemma \ref{lem3} and Lemma \ref{lem4}, $\left\|\frac{1}{\left(v_{t}+\epsilon\right)^{p}}\right\| \leq \frac{1}{\epsilon^{p}}$.
		
		Therefore,
		\begin {equation}\label {eq19}
		\begin {aligned}
		\mathbb {E}\left [\left\langle\nabla f\left (\theta_{t}\right), \frac {m_{t}-m_{t-1}}{\left (v_{t}+\epsilon\right)^{p}}\right\rangle\right]
		& \leq
		\left\|\nabla f\left (\theta_{t}\right)\right\| \left\|\frac {m_{t}-m_{t-1}}{\left (v_{t}+\epsilon\right)^{p}}\right\| \\
		& \leq
		\|\nabla f\left(\theta_{t}\right)\| (\left\|m_{t}\right\| + \left\|m_{t-1}\right\|)\left\|\frac{1}{\left(v_{t}+\epsilon\right)^{p}}\right\|
		\leq \frac {2C_{1}^{2}}{\epsilon^{p}}.
		\end {aligned}
		\end {equation}
		
		The proof is over.
	\end{proof}
	
	\section{Proof of Theorem 3.1} 
	\label{prth32}
	Given that $ f $ is $ L $-smooth, it can be inferred from the property of $ L $ - smooth functions that
	
	\begin{equation*}\label{eq25}
		f\left(\theta_{t+1}\right) \leq f\left(\theta_{t}\right)+\left\langle\nabla f\left(\theta_{t}\right), \theta_{t+1}-\theta_{t}\right\rangle+\frac{L}{2}\left\|\theta_{t+1}-\theta_{t}\right\|^{2},
	\end{equation*}
	%
	When $\cos \left(\theta_{t}, \nabla f\left(\theta_{t}\right)\right)<\delta\eta_{t} / \sqrt{\operatorname{dim}(\theta)}$, let $D = 1 / \sqrt{\operatorname{dim}(\theta)}$. Since $\theta_{t + 1}-\theta_{t} =-\eta_{t} \frac{m_{t}}{{\left(v_{t}+\epsilon\right)^{p}}} + \eta_{t}		\left\langle\hat{\theta}_{t} , \frac{m_{t}}{\left(v_{t}+\epsilon\right)^{p}}\right\rangle \hat{\theta}_{t}$, the above inequality can be further expressed as
	\begin{equation*}\label{eq26}
		\begin{aligned}
			f\left(\theta_{t+1}\right) 
			&\leq 
			f\left(\theta_{t}\right)-\left\langle\nabla f\left(\theta_{t}\right), \eta_{t} \frac{m_{t}}{{\left(v_{t}+\epsilon\right)^{p}}}\right\rangle+\left\langle\nabla f\left(\theta_{t}\right), \eta_{t}\left\langle\hat{\theta}_{t} , \frac{m_{t}}{\left(v_{t}+\epsilon\right)^{p}}\right\rangle \hat{\theta}_{t}\right\rangle \\
			& \quad +
			\frac{\eta_{t}^{2} L}{2}\left\| \frac{g_{t}}{{\left(v_{t}+\epsilon\right)^{p}}}-
			\left\langle\hat{\theta}_{t} , \frac{m_{t}}{\left(v_{t}+\epsilon\right)^{p}}\right\rangle 
			\hat{\theta}_{t}\right\|^{2},
		\end{aligned}
	\end{equation*}
	by taking expectations on both sides of the above inequality with respect to the random variable, we obtain
	\begin{equation*}\label{eq27}
		\begin{aligned}
			\mathbb{E}\left[f\left(\theta_{t+1}\right)\right] \\ &\leq   \mathbb{E}\left[f\left(\theta_{t}\right)\right]-\mathbb{E}\left[\left\langle\nabla f\left(\theta_{t}\right), \eta_{t} \frac{m_{t}}{({v_{t}+\epsilon})^{p}}\right\rangle\right]+
			\mathbb{E}\left[\left\langle\nabla f\left(\theta_{t}\right),\left\langle\hat{\theta}_{t} , \frac{m_{t}}{\left(v_{t}+\epsilon\right)^{p}}\right\rangle \hat{\theta}_{t}\right\rangle\right]\\
			& \quad +\frac{\eta_{t}^{2} L}{2}\mathbb{E}\left[\left\|\frac{g_{t}}{({v_{t}+\epsilon})^{p}}\right\|^{2}\right] 
			+\frac{\eta_{t}^{2} L}{2}\mathbb{E}\left[\left\|\left\langle\hat{\theta}_{t} , \frac{m_{t}}{\left(v_{t}+\epsilon\right)^{p}}\right\rangle \hat{\theta}_{t}\right\|^{2}\right] \\
			&  \leq \mathbb{E}\left[f\left(\theta_{t}\right)\right]-\eta_{t}\mathbb{E}\left[\left\langle\nabla f\left(\theta_{t}\right),  \frac{m_{t}}{({v_{t}+\epsilon})^{p}}\right\rangle\right]+ 
			\eta_{t}\mathbb{E}\left[
			\left\langle\hat{\theta}_{t} , \frac{m_{t}}{\left(v_{t}+\epsilon\right)^{p}}\right\rangle
			\left\langle
			\nabla f\left(\theta_{t}\right), \hat{\theta}_{t}
			\right\rangle
			\right]\\
			&  \quad +\frac{\eta_{t}^{2} L}{2}\mathbb{E}\left[\left\|\frac{g_{t}}{({v_{t}+\epsilon})^{p}}\right\|^{2}\right]
			+\frac{\eta_{t}^{2} L}{2}\mathbb{E}\left[\left\|\left\langle\hat{\theta}_{t} , \frac{m_{t}}{\left(v_{t}+\epsilon\right)^{p}}\right\rangle \hat{\theta}_{t}\right\|^{2}\right]\\
			&  \leq
			\mathbb{E}\left[f\left(\theta_{t}\right)\right]-\eta_{t}\mathbb{E}\left[\left\langle\nabla f\left(\theta_{t}\right), \frac{g_{t}}{({v_{t}+\epsilon})^{p}}\right\rangle\right]+ 
			\eta_{t}\frac{\beta_{1t}}{1-\beta_{1 t}}\mathbb{E}\left[\left\langle\nabla f\left(\theta_{t}\right),  \frac{m_{t}-m_{t-1}}{({v_{t}+\epsilon})^{p}}\right\rangle\right]\\
			&  \quad +
			\eta_{t}\mathbb{E}\left[
			\left\langle\hat{\theta}_{t} , \frac{m_{t}}{\left(v_{t}+\epsilon\right)^{p}}\right\rangle
			\left\langle
			\nabla f\left(\theta_{t}\right), \hat{\theta}_{t}
			\right\rangle
			\right]
			+\frac{\eta_{t}^{2} L}{2}\mathbb{E}\left[\left\|\frac{g_{t}}{({v_{t}+\epsilon})^{p}}\right\|^{2}\right]\\
			& \quad +\frac{\eta_{t}^{2} L}{2}\mathbb{E}\left[\left\|\left\langle\hat{\theta}_{t} , \frac{m_{t}}{\left(v_{t}+\epsilon\right)^{p}}\right\rangle \hat{\theta}_{t}\right\|^{2}\right],
		\end{aligned}
	\end{equation*}
	using Lemma \ref{lem5} and the condition $\cos \left(\theta_{t}, \nabla f\left(\theta_{t}\right)\right)<\delta\eta_{t}D$, the above inequality can be simplified to
	\begin{equation*}\label{eq27}
		\begin{aligned}
			\mathbb{E}\left[f\left(\theta_{t+1}\right)\right]  &\leq 
			\mathbb{E}\left[f\left(\theta_{t}\right)\right]-\eta_{t}\mathbb{E}\left[\left\langle\nabla f\left(\theta_{t}\right), \frac{g_{t}}{({v_{t}+\epsilon})^{p}}\right\rangle\right]+ 
			\eta_{t}\frac{\beta_{1t}}{1-\beta_{1 t}}\frac{2C_{1}^{2}}{\epsilon^{p}}
			+
			\frac{\eta_{t}^{2}\delta C_{1}^{2}D}{\epsilon^{p}}
			+\frac{\eta_{t}^{2} C_{1}^{2}L}{\epsilon^{2p}},
		\end{aligned}
	\end{equation*}
	by rearranging the terms of the above inequality, we get
	\begin{equation*}\label{eq28}
		\begin{aligned}
			\eta_{t}\mathbb{E}\left[\left\langle\nabla f\left(\theta_{t}\right), \frac{g_{t}}{({v_{t}+\epsilon})^{p}}\right\rangle\right]
			&\leq 
			\mathbb{E}\left[f\left(\theta_{t}\right)\right]-\mathbb{E}\left[f\left(\theta_{t+1}\right)\right] + 
			\eta_{t}\frac{\beta_{1t}}{1-\beta_{1 t}}\frac{2C_{1}^{2}}{\epsilon^{p}}
			+
			\frac{\eta_{t}^{2}\delta C_{1}^{2}D}{\epsilon^{p}}
			+\frac{\eta_{t}^{2} C_{1}^{2}L}{\epsilon^{2p}},
		\end{aligned}
	\end{equation*}
	based on Lemma \ref{lem5}, the following inequality can be derived:
	\begin{equation*}\label{eq29}
		\begin{aligned}
			\frac{\eta_{t}}{({C_{1}^{2}+\epsilon})^{p}} \mathbb{E}\left[\left\|g_{t}\right\|^{2}\right]
			&\leq 
			\mathbb{E}\left[f\left(\theta_{t}\right)\right]-\mathbb{E}\left[f\left(\theta_{t+1}\right)\right] + 
			\eta_{t}\frac{\beta_{1t}}{1-\beta_{1 t}}\frac{2C_{1}^{2}}{\epsilon^{p}}
			+
			\frac{\eta_{t}^{2}\delta C_{1}^{2}D}{\epsilon^{p}}
			+\frac{\eta_{t}^{2} C_{1}^{2}L}{\epsilon^{2p}},
		\end{aligned}
	\end{equation*}
	summing both sides of the above inequality from $ t = 1$ to $T $, and considering that $\eta_{t}<1$ and $\beta_{1 t}=\beta_{1} \lambda^{t-1}$, we have
	\begin{equation*}
		\begin{aligned}
			\sum_{t=1}^{T} \frac{\eta_{t}}{({C_{1}^{2}+\epsilon})^{p}} \mathbb{E}\left[\left\|g_{t}\right\|^{2}\right] & \leq \mathbb{E}\left[f\left(\theta_{1}\right)\right]-\mathbb{E}\left[f\left(\theta_{T+1}\right)\right]
			+\frac{2\beta_{1}C_{1}^{2}}{(1-\beta_{1})\epsilon^{p}}\sum_{t=1}^{T} \lambda^{t-1} \\
			& \quad +\left(\frac{\delta C_{1}^{2}D}{\epsilon^{p}}
			+\frac{C_{1}^{2}L}{\epsilon^{2p}}\right) \sum_{t=1}^{T}\eta_{t}^{2} \\
			& \leq 
			\mathbb{E}\left[f\left(\theta_{1}\right)\right]-\mathbb{E}\left[f\left(\theta_{T+1}\right)\right]
			+\frac{2\beta_{1}C_{1}^{2}}{(1-\beta_{1})(1-\lambda)\epsilon^{p}}\\
			&\quad
			+\left(\frac{\delta C_{1}^{2}D}{\epsilon^{p}}
			+\frac{C_{1}^{2}L}{\epsilon^{2p}}\right) \sum_{t=1}^{T}\eta_{t}^{2},
		\end{aligned}
	\end{equation*}
	
	then, we can obtain
	\begin{equation*}
		\begin{aligned}
			\min _{t=1: T} \mathbb{E}\left[\left\|g_{t}\right\|^{2}\right] \sum_{t=1}^{T} \frac{\eta_{t}}{({C_{1}^{2}+\epsilon})^{p}} 
			&\leq \mathbb{E}\left[f\left(\theta_{1}\right)\right]-\mathbb{E}\left[f\left(\theta_{T+1}\right)\right]
			+\frac{2\beta_{1}C_{1}^{2}}{(1-\beta_{1})(1-\lambda)\epsilon^{p}}\\
			& \quad
			+\left(\frac{\delta C_{1}^{2}D}{\epsilon^{p}}
			+\frac{C_{1}^{2}L}{\epsilon^{2p}}\right) \sum_{t=1}^{T}\eta_{t}^{2},
		\end{aligned}
	\end{equation*}
	further, we get
	\begin{equation*}
		\min _{t=1: T} \mathbb{E}\left[\left\|g_{t}\right\|^{2}\right] \leq \frac{f\left(\theta_{1}\right)-f^{*}+\frac{2\beta_{1}C_{1}^{2}}{(1-\beta_{1})(1-\lambda)\epsilon^{p}}
			+\left(\frac{\delta C_{1}^{2}D}{\epsilon^{p}}
			+\frac{C_{1}^{2}L}{\epsilon^{2p}}\right) \sum_{t=1}^{T}\eta_{t}^{2}}{\sum_{t=1}^{T} \frac{\eta_{t}}{({C_{1}^{2}+\epsilon})^{p}}},
	\end{equation*}
	since $\alpha_{t}$ satisfies Assumption \ref{ass33}:
	\begin{equation*}
		\lim _{T \rightarrow \infty} \sum_{t=1}^{T} \eta_{t}^{2} <\infty,
	\end{equation*}
	\begin{equation*}
		\lim _{T \rightarrow \infty} \sum_{t=1}^{T} \frac{\eta_{t}}{({C^{2}+\epsilon})^{p}}=\infty,
	\end{equation*}
	it follows that
	\begin{equation*}
		\lim _{T \rightarrow \infty} \min _{t=1: T} \mathbb{E}\left[\left\|g_{t}\right\|^{2}\right] = 0.
	\end{equation*}
	
	The proof is over.

\end{document}